\documentclass[a4paper,9pt,twoside]{book}
\usepackage[english]{babel}
\usepackage{amsthm}
\usepackage[utf8]{inputenc}
\usepackage{amssymb}
\usepackage{amsmath}
\usepackage{cite}
\usepackage{color}
\usepackage{hyperref}
\usepackage{titlesec}
\usepackage{fancyhdr}
\usepackage{bbm}
\usepackage{tikz}
\usepackage{tkz-berge}
\usepackage{float}
\usepackage{subfig}
\usepackage{listings}
\lstset{breaklines=true}

\setlength{\headheight}{15pt}

\pagestyle{fancy}
\fancyhf{}

\fancyhead[LE,RO]{\scshape \scriptsize \thepage}
\fancyhead[RE]{\scshape \center \scriptsize \rightmark }
\fancyhead[LO]{\scshape \center \scriptsize \rightmark }

\fancypagestyle{plain}{ %
  \fancyhf{} 
  
}

\renewenvironment{proof}{{\scshape \setlength{\parindent}{1.5em}\indent Proof.}}{\qed \vspace{0.5em} \par }
\titleformat{\chapter}[block]{\Large\bfseries\centering}{\thechapter}{0.5em}{}
\titleformat{\subsection}[hang]{\bfseries\centering}{\thesubsection}{0.5em}{}
\titleformat{\section}[hang]{\bfseries\centering}{\thesection}{0.5em}{}
\hypersetup{colorlinks=false}
\newtheoremstyle{plain_own} 
    {\topsep}                    
    {\topsep}                    
    {\itshape}                   
    {1.5em}                           
    {\scshape}                   
    {.}                          
    {.5em}                       
    {}  
\newtheoremstyle{definition_own} 
    {\topsep}                    
    {\topsep}                    
    {}             		      
    {1.5em}                           
    {\scshape}                   
    {.}                          
    {.5em}                       
    {}  
\theoremstyle{plain_own}
\newtheorem{lemma}{Lemma}[section]
\newtheorem{theorem}[lemma]{Theorem}
\newtheorem{corollary}[lemma]{Corollary}
\theoremstyle{definition_own}
\newtheorem{definition}[lemma]{Definition}
\newtheorem{remark}[lemma]{Remark}
\newtheorem{example}[lemma]{Example}

\linespread{1.2}
\begin{document}
\begin{titlepage}
\begin{center}
\textbf{\Huge{BACHELOR THESIS}}
\vskip 2cm
\textbf{\huge Isomorphism Classes\vskip 0.4cm
of\vskip 0.4cm Vertex-Transitive \vskip 0.9cm Tournaments}
\vskip 1cm
\begin{align*}
\textnormal{\Large Author}&\textnormal{\Large: Stefan Zetzsche}\\
\textnormal{\Large ID}&\textnormal{\Large:}\\
	\textnormal{\Large Supervisor}&\textnormal{\Large: Dr. Matthias Hamann}\\
	\textnormal{\Large Faculty}&\textnormal{\Large: Natural Science, Computer Science and Mathematics}\\
	\textnormal{\Large Course}&\textnormal{\Large: B.Sc. Mathematics}\\
	\textnormal{\Large Submitted}&\textnormal{\Large: 2016}
\end{align*}
\vskip 6cm
\textnormal{\Large University of Hamburg, Bundesstr. 55, 20146 Hamburg}\\
\end{center}
\end{titlepage}

\chapter*{Abstract}
Tournaments are graphs obtained by assigning a direction for every edge in an undirected complete graph. We give a formula for the number of isomorphism classes of vertex-transitive tournaments with prime order. For that, we introduce Cayley tournaments, which are special Cayley digraphs, and show that there is a one to one identification between the isomorphism classes of vertex-transitive tournaments with prime order and the isomorphism classes of Cayley tournaments with prime order.
\tableofcontents

\chapter{Introduction}
In his paper \cite{turner1967point} from 1967, Turner enumerated the isomorphism classes of vertex-transitive graphs of prime order. In fact, every vertex-transitive graph of prime order is isomorphic to a Cayley graph. Turner showed that two Cayley graphs $\textnormal{Cay}(\mathbb{Z}_p,S)$ and $\textnormal{Cay}(\mathbb{Z}_p,S')$ are isomorphic if and only if $S'=aS \textnormal{ for some } a \in \mathbb{Z}_p^{\times}$. Hence, the number of isomorphism classes is the number of equivalence classes of the relation '$S \textnormal{ equivalent to } S'$', where $S$ is equivalent to $S'$ if and only if $S'=aS \textnormal{ for some } a \in \mathbb{Z}_p^{\times}$. To enumerate the equivalence classes, he used Pólya's enumeration theorem in the following way. The set of connection sets $S$ can be identified with the set of characteristic functions $X = \lbrace \varphi \colon \mathbb{Z}_p \setminus \lbrace 0 \rbrace \rightarrow \lbrace 0, 1 \rbrace \rbrace$. We know that every $a \in \mathbb{Z}_p^{\times}$ permutes  $\mathbb{Z}_p \setminus \lbrace 0 \rbrace$ and we can expand this to a permutation of $X$ by defining $a\varphi(d) = \varphi(a^{-1}d)$. Pólya's enumeration theorem then yields the number of orbits of $X$ under $\mathbb{Z}_p \setminus \lbrace 0 \rbrace$, which equals the number of equivalence classes.

Alspach and Mishna \cite{alspachmishna} generalised this approach in 2002. For that, a particular Cayley (di)graph $\textnormal{Cay}(\Gamma,S)$ is called \textit{(D)CI-(di)graph}, if $\textnormal{Cay}(\Gamma,S)$ is isomorphic to $\textnormal{Cay}(\Gamma, S')$ if and only if $S'=aS$ for some group automorphism $a \in \textnormal{Aut}(\Gamma)$. Furthermore, $\Gamma$ is called \textit{(D)CI-group}, if for arbitrary $S,S',\ \textnormal{Cay}(\Gamma,S)$ is isomorphic to $\textnormal{Cay}(\Gamma, S')$ if and only if $S'=aS$ for some group automorphism $a \in \textnormal{Aut}(\Gamma)$. With these definitions, Alspach and Mishna were able to extend Turner's result to Cayley (di)graphs on cyclic groups of square-free order, circulant (di)graphs of arbitrary order and Cayley (di)graphs on elementary abelian groups. Unfortunately, it is not possible to use their approach to enumerate the isomorphism classes of vertex-transitive tournaments of prime order. Though, following Turner, we are able to show that every such graph is isomorphic to a special kind of Cayley digraph, that is a Cayley tournament. Even more, we state, that a Cayley tournament of $\mathbb{Z}_p$ is a DCI-digraph. Therefore, we also also seek to enumerate the related equivalence classes, still though, in a slightly different spirit. In chapters two and three we introduce all the necessary concepts, that is group actings, vertex-transitivity and Cayley graphs. In particular, we show that any undirected, connected and vertex-transitive graph is a retract of a Cayley graph. Furthermore, we seek to understand, in which case a graph is isomorphic to a Cayley graph. The main result is then given by Theorem 4.1.3. Finally, the last chapter investigates a possible generalisation of our approach to groups of arbitrary order. Precisely, we look into Mansilla's \cite{mansilla2004infinite} construction of infinite families of vertex-transitive tournaments, that are not Cayley tournaments and a paper by Joy Morris \cite{morris1999isomorphic}, investigating in which case two Cayley graphs on non-isomorphic groups are isomorphic. 

\chapter{Algebraic Tools}
To begin with, we will clear the notations. Graphs will be denoted by $G,H$ and its vertex set and edge set by $V(G)$ and $E(G)$, respectively. Vertices are denoted by $v,w$ and an edge from $v$ to $w$ by $v \sim w$. 
To prevent misunderstandings, we will simply write out that two elements are equivalent, instead of also using $\sim$. For groups we write $\Gamma$, for subsets most of the time $S$ and for its elements $\rho, \sigma, g, h, a, b$. Though, since Cayley graphs' vertex sets are also groups, we will sometimes have to switch between the notations. The same goes for sets, which we name by $V,W$ or $X,Y$, where $V$ indicates the further use as a vertex set. The order of a set $X$ goes by $|X|$, if we write $|G|$, we mean $|V(G)|$. Furthermore, $\Gamma' < \Gamma$ indicates that $\Gamma'$ is a subgroup of $\Gamma$. Instead of $\mathbb{Z}/n\mathbb{Z}$, we will write $\mathbb{Z}_n$. The multiplicative group of units is denoted by $\mathbb{Z}_n^{\times}$.  Two graphs $G,H$ are homomorphic, if there exists a map $\varphi \colon V(G) \rightarrow V(H)$, such that, if $v \sim w$, also $\varphi(v) \sim \varphi(w)$. The map $\varphi$ is then called (graph-)homomorphism. If there further exists a homomorphism $\varphi^{-1} \colon V(H) \rightarrow V(G)$, such that $\varphi \circ \varphi^{-1} = \textnormal{id}_{V(H)}$ and $\varphi^{-1} \circ \varphi = \textnormal{id}_{V(G)}$, $G$ and $H$ are called isomorphic and we will write $G \cong H$. In this case $\varphi$ and $\varphi^{-1}$ are called (graph-)isomorphism. Furthermore, if $G=H$, we will call them (graph-)automorphism. The automorphism group is given by $\textnormal{Aut}(G)$. The dot on $\stackrel{.} \bigcup$ indicates a disjunct union. Most of the time we will use multiplicative notation for groups, even though the main application is the additive group $\mathbb{Z}_n$. All the groups we use are finite, if not stated otherwise. $\mathbb{F}_n$ is the unique field of order $n$. $\textnormal{deg}^+(v)$ and $\textnormal{deg}^-(v)$ denote the outer and the inner degree of some vertex $v$, respectively. The projection on the $i$-th coordinate is given by $\textnormal{pr}_i$.
\section{Permutation groups}
In this section we will basically follow \cite{godsil2013algebraic}. Let $V$ be a set. The group of all permutations on $V$, that is the group of all bijections on $V$, is denoted by $\textnormal{Sym}(V)$, or simply $\textnormal{Sym}(n)$, if $\vert V \vert = n \in \mathbb{N}$. Any subgroup of $\textnormal{Sym}(V)$ is called \textit{permutation group} on $V$. Hence, if $V$ is the vertex set of a graph $G$, the group of automorphisms of $G$ is a permutation group on $V$.

First we will recollect some basic constructions from algebra. Let $\Gamma$ be a group and $V$ a set. If there exists a map $\varphi \colon \Gamma \times V \rightarrow V$, such that 
\[ \varphi(1,v) = v \; \textnormal{for all}\ v \in V \]
and
\[ \varphi(g, \varphi(h,v)) = \varphi(gh,v) \, \textnormal{for all}\ g,h \in \Gamma, v \in V, \] 
we say $\Gamma$ \textit{acts} on $V$. $\varphi$ induces a group homomorphism, a so called \textit{permutation representation} $ \rho:\Gamma \rightarrow \textnormal{Sym}(V)$ by 
\[ \rho(g)(v) := \varphi(g,v)\]
 and vice versa. For $v \in V,\, g \in \Gamma$, we shorten the notation by 
 \[v^g := \rho(g)(v).\] 
 Then for every subset $W \subseteq V$, $W^g$ is a subset and every $g \in \Gamma$ induces a permutation on $\mathcal{P}(V)$. We say $H \subseteq V$ is $\Gamma$-\textit{invariant}, if $H^g = H \, \textnormal{for all}\ g \in \Gamma$. Such subsets allow us to introduce $g \downharpoonright H$, the restriction of the induced permutation of $g$ to $H$. Again, the image of $\Gamma \rightarrow \textnormal{Sym}(H), \, g \mapsto g \downharpoonright H$ is a permutation group on $H$. We say a permutation group $\Gamma$ is \textit{transitive}, if for any $v, w \in V$, we find $g \in \Gamma$, such that $v^g = w$. 

For $v \in V$ consider the \textit{orbit} $v^\Gamma := \lbrace  v^g \mid g \in \Gamma \rbrace \subseteq V$. Clearly, it is $\Gamma$-invariant. Even more, we have
\[ (v^g)^{hg^{-1}} = v^{hg^{-1}g} = v^h ,\] 
hence $\Gamma$ acts transitively on $v^{\Gamma}$. Let $w \in v^{\Gamma}$, then $w=v^{g},\ g\in \Gamma$ and therefore $w^h = (v^{g})^h = v^{hg} \in v^{\Gamma}$, for $h\in \Gamma$. On the other hand, $v^h = v^{hg^{-1}g} = (v^{g})^{hg^{-1}} = w^{hg^{-1}} \in w^{\Gamma}$. Hence, it holds $w^{\Gamma} = v^{\Gamma}$ and for $w \not\in v^{\Gamma}$ it also holds $w^{\Gamma} \cap v^{\Gamma} = \emptyset$. Thereby it follows $V=\, \stackrel{.} \bigcup_{v} v^{\Gamma}$ and hence, that any $\Gamma$-invariant subset is an union of orbits of $\Gamma$. This is the same as acknowledging that
\[ v\ \textnormal{equivalent to } w\ \textnormal{if and only if}\ \exists g \in \Gamma: v^g = w \] defines an equivalence relation on $V$. Hence, $\Gamma$ acts transitively on $V$, if and only if there exists only one orbit.

\section{Counting}
If $\Gamma$ is a permutation group on $V$, for any $v \in V$, we define the \textit{stabilizer} $\Gamma_v := \lbrace g \in \Gamma \mid v^g = v \rbrace.$ One generalizes this notion by $\Gamma_{v_1,...,v_k} := \bigcap_i \Gamma_{v_i}$ and weakens it for any $W \subseteq V$ with the concept of the \textit{setwise stabilizer} $\Gamma_W := \lbrace g \in \Gamma \mid W^g = W \rbrace$. If $G$ is a group and $H\subseteq$ a subgroup, the relation 
\[
a\ \textnormal{equivalent to } b\ \textnormal{if and only if } \exists h \in H: a = bh\] defines an equivalence relation on $G$. The equivalence class of an element $b$ is denoted by $bH$ and called the \textit{left coset} of $H$ in $G$ with respect to $b$. Since the stabilizer $\Gamma_v$ is a subgroup of $\Gamma$, we get a partition of $\Gamma$ into left cosets of $\Gamma_v$.
\begin{lemma}
\label{bijec}
	Let $\Gamma$ be a permutation group acting on $V$ and let $W \subseteq V$ be an orbit of $\Gamma$. Then for any $v,w \in W$ there exists $h \in \Gamma$, such that $\lbrace g \in \Gamma \mid v^g = w \rbrace = h\Gamma_{v}$. Conversely, for any $h \in \Gamma,\, v\in W$, there exists $w \in W$, such that for all $g \in h\Gamma_v: v^g=w$.
\end{lemma}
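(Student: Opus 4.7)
The plan is to reduce both claims to the exponent rule $(v^a)^b = v^{ba}$, which follows from $\rho$ being a group homomorphism: $(v^a)^b = \rho(b)(\rho(a)(v)) = \rho(ba)(v) = v^{ba}$. This is the only non-trivial ingredient; the rest is bookkeeping with cosets.

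For the first claim, since $v,w$ both lie in the orbit $W$, transitivity of $\Gamma$ on $W$ (established in the preceding discussion) yields at least one $h \in \Gamma$ with $v^h = w$. I would then prove the two inclusions. For $h\Gamma_v \subseteq \{g \mid v^g = w\}$, pick $s \in \Gamma_v$ and compute $v^{hs} = (v^s)^h = v^h = w$. For the reverse inclusion, take $g$ with $v^g = w$ and set $s := h^{-1}g$; I need $s \in \Gamma_v$, which follows from $v^s = v^{h^{-1}g} = (v^g)^{h^{-1}} = w^{h^{-1}} = (v^h)^{h^{-1}} = v^{h^{-1}h} = v$. Hence $g = hs \in h\Gamma_v$.

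For the converse, given $h$ and $v$, the natural candidate is $w := v^h$, which lies in $W$ because $W$ is $\Gamma$-invariant. Then for any element $g = hs$ of $h\Gamma_v$, the same calculation $v^{hs} = (v^s)^h = v^h = w$ shows $v^g = w$ uniformly for every $g \in h\Gamma_v$, which is precisely the required statement. I do not anticipate a substantive obstacle: the only point demanding care is keeping the exponent convention $(v^a)^b = v^{ba}$ straight, since getting the composition order wrong would interchange left and right cosets and break the identification with $h\Gamma_v$.
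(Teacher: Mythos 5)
Your proposal is correct and follows essentially the same route as the paper: use transitivity on the orbit to produce $h$ with $v^h=w$, then reduce membership in $h\Gamma_v$ to the identity $(v^a)^b=v^{ba}$, which is indeed the paper's convention. The only difference is that you spell out both inclusions of the first claim separately, whereas the paper folds the inclusion $h\Gamma_v\subseteq\lbrace g\mid v^g=w\rbrace$ into its ``conversely'' sentence; this is a matter of presentation, not substance.
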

\begin{proof}
	Let $v,w \in W$. Since $W$ is an orbit of $\Gamma$, $\Gamma$ acts transitively on $W$. Hence, one finds $h \in \Gamma$, such that $v^h = w$. Suppose there is another $g \in \Gamma$, such that $v^g = w$. Then $v^h = v^g$, hence $v^{h^{-1}g} = (v^g)^{h^{-1}} = (v^h)^{h^{-1}} = v^{h^{-1}h} = v$. Therefore $h^{-1}g \in \Gamma_v$ and $g \in h\Gamma_v$. Conversely, let $g \in h \Gamma_v$. Then $v^g = v^h =: w$.
\end{proof}
\begin{lemma}[Orbit-stabilizer]
\label{orbitstabilizer}
	Let $\Gamma$ be a permutation group acting on $V$ and let $v \in V$. Then
	\begin{equation*}
		\vert \Gamma_v \vert \vert v^\Gamma \vert = \vert \Gamma \vert.
	\end{equation*}
\end{lemma}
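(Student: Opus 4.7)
The plan is to set up a bijection between the orbit $v^\Gamma$ and the set of left cosets of the stabiliser $\Gamma_v$ in $\Gamma$, and then invoke the fact that the left cosets partition $\Gamma$ into blocks each of size $|\Gamma_v|$.

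First, I would define a map $\Phi$ from $v^\Gamma$ to the set of left cosets of $\Gamma_v$ by sending $w \in v^\Gamma$ to the preimage set $\{g \in \Gamma \mid v^g = w\}$. The first half of \autoref{bijec} says exactly that this preimage set really is a left coset of $\Gamma_v$, so $\Phi$ is well-defined. Injectivity is immediate: if $\Phi(w_1) = \Phi(w_2)$, take any $g$ in this common coset and then $w_1 = v^g = w_2$. Surjectivity is the converse half of \autoref{bijec}: any left coset $h\Gamma_v$ has the form $\{g \mid v^g = w\}$ for the element $w = v^h \in v^\Gamma$, which is $\Phi(w)$.

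Next, I would remark that every left coset $h\Gamma_v$ has cardinality $|\Gamma_v|$, since left multiplication by $h$ is a bijection $\Gamma_v \to h\Gamma_v$. Because the left cosets partition $\Gamma$, the number of cosets equals $|\Gamma|/|\Gamma_v|$. Combining this with the bijection $\Phi$ gives $|v^\Gamma| = |\Gamma|/|\Gamma_v|$, which rearranges to the claimed identity $|\Gamma_v|\,|v^\Gamma| = |\Gamma|$.

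There is no real obstacle: \autoref{bijec} has already packaged both directions of the correspondence between orbit elements and cosets. The only small point requiring care is the well-definedness of $\Phi$ as a map \emph{into the set of cosets}, which is precisely what the first half of \autoref{bijec} guarantees.
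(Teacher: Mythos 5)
Your proposal is correct and follows essentially the same route as the paper: both use \autoref{bijec} to establish the bijection between the orbit $v^\Gamma$ and the left cosets of $\Gamma_v$, then count via the partition of $\Gamma$ into cosets of size $|\Gamma_v|$. You simply spell out the well-definedness, injectivity, and surjectivity of the correspondence more explicitly than the paper does.
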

\begin{proof}
	For a fixed $v \in V$ set $W := v^{\Gamma}$. From the previous \autoref{bijec}, we know that there is a bijection between $W$ and the left cosets of $\Gamma_v$. Since $\Gamma = \stackrel{.} \cup_{h} h \Gamma_v$, we know that $\Gamma$ can be partitioned into $\vline W \vline = \vline v^\Gamma \vline$ left cosets of $\Gamma_v$, each containing $\vline \Gamma_v \vline$ elements. 
\end{proof}
How are the stabilizers of two distinct points $v,w \in V$ related? Following Lemma answers this for the case in which $v,w$ are in the same orbit.

\begin{lemma}
	Let $\Gamma$ be a permutation group acting on $V$ and let $v \in V$. If $g \in \Gamma$, then $g\Gamma_v g^{-1} = \Gamma_{v^g}$.
\end{lemma}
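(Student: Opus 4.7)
My plan is to prove the two set inclusions $g \Gamma_v g^{-1} \subseteq \Gamma_{v^g}$ and $\Gamma_{v^g} \subseteq g \Gamma_v g^{-1}$ separately, since both are pure manipulations of the action axioms $\varphi(1,v)=v$ and $\varphi(a,\varphi(b,v))=\varphi(ab,v)$, which in exponent notation amount to the composition rule $(v^a)^b = v^{ba}$ (as used in the proof of \autoref{bijec}).

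For the inclusion $g\Gamma_v g^{-1} \subseteq \Gamma_{v^g}$, I would take an arbitrary element of the left-hand side, writing it as $g h g^{-1}$ with $h \in \Gamma_v$, and then compute $(v^g)^{ghg^{-1}}$ by repeatedly collapsing exponents: the outer action rewrites this as $v^{ghg^{-1}g}$, which simplifies to $v^{gh} = (v^h)^g$, and since $v^h=v$ this is just $v^g$. So $ghg^{-1}$ fixes $v^g$ and belongs to $\Gamma_{v^g}$.

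For the reverse inclusion, I would take $k \in \Gamma_{v^g}$, so that $(v^g)^k = v^g$, rewrite this equality as $v^{kg}=v^g$, and apply the element $g^{-1}$ on both sides (i.e.\ take exponents $g^{-1}$) to obtain $v^{g^{-1}kg} = v$. This places $g^{-1}kg$ in $\Gamma_v$, whence $k = g(g^{-1}kg)g^{-1} \in g\Gamma_v g^{-1}$, completing the proof.

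I do not anticipate a conceptual obstacle, only a notational one: because the paper's superscript convention reverses the natural left-to-right reading of the group operation (so that the outer exponent sits on the \emph{left} in the collapsed product), care must be taken when inserting or removing a factor of $gg^{-1}$ in the exponent so as to reduce consistently. Keeping the axiom $(v^a)^b = v^{ba}$ visibly in mind should prevent this from becoming a pitfall.
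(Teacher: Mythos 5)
Your proposal is correct and follows essentially the same route as the paper's own proof: both directions are the identical exponent-collapsing computations $(v^g)^{ghg^{-1}} = v^{ghg^{-1}g} = v^{gh} = (v^h)^g = v^g$ and $(v^g)^{g^{-1}} = v^{g^{-1}hg} = v$, using the convention $(v^a)^b = v^{ba}$. Your explicit caution about the reversed composition order is well placed but introduces nothing beyond what the paper does.
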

\begin{proof}
	Let $ghg^{-1} \in g\Gamma_v g^{-1}$. Then $(v^g)^{ghg^{-1}} = v^{ghg^{-1}g} = v^{gh} = (v^{h})^{g} = v^g$.
	Conversely, if $h \in \Gamma_{v^g},$ we get $v^g = (v^g)^h = v^{hg}$ and therefore $v = (v^g)^{g^{-1}} = (v^{hg})^{g^{-1}} = v^{g^{-1}hg}$. Hence $g^{-1}hg \in \Gamma_v$ or $h \in g\Gamma_v g^{-1}$.
\end{proof}
We will now introduce a dual concept to the stabilizer. For $g \in \Gamma$, the fixed points by $g$  are $\textnormal{fix}(g):=\lbrace v \in V \mid v^g = v \rbrace$. The following Lemma by Burnside states that the number of orbits of $\Gamma$ on $V$ is equal to the average number of fixed points by any element $g \in \Gamma$.
\begin{lemma}[Burnside]
Let $\Gamma$ be a permutation group acting on $V$. Then 
\begin{equation*}
	\vert \lbrace v^\Gamma \mid v \in V \rbrace \vert \vert \Gamma \vert = \sum_{g \in \Gamma} \vert \textnormal{fix}(g) \vert .
\end{equation*}
\end{lemma}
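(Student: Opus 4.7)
The plan is a standard double counting argument applied to the incidence set
\[ F := \{ (g, v) \in \Gamma \times V \mid v^g = v \}. \]
I would count $|F|$ in two ways. Fixing the first coordinate gives immediately
\[ |F| = \sum_{g \in \Gamma} |\textnormal{fix}(g)|, \]
which is the right-hand side of the claim. Fixing the second coordinate gives
\[ |F| = \sum_{v \in V} |\Gamma_v|, \]
since for a given $v$, the pairs in $F$ with second coordinate $v$ correspond exactly to the elements of the stabilizer $\Gamma_v$.

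Next I would rewrite the second sum using \autoref{orbitstabilizer}: for every $v \in V$ we have $|\Gamma_v| = |\Gamma|/|v^\Gamma|$. Partitioning $V$ into orbits (as established in Section 2.1, $V = \stackrel{.}{\bigcup}_v v^\Gamma$), for any single orbit $O$ we obtain
\[ \sum_{v \in O} |\Gamma_v| = \sum_{v \in O} \frac{|\Gamma|}{|O|} = |\Gamma|. \]
Summing over the distinct orbits, each orbit contributes exactly $|\Gamma|$, so
\[ \sum_{v \in V} |\Gamma_v| = |\{ v^\Gamma \mid v \in V \}| \cdot |\Gamma|. \]
Comparing the two evaluations of $|F|$ yields the desired identity.

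There is no real obstacle here: the only subtlety is making sure to group terms by orbits rather than by individual vertices when applying orbit-stabilizer, so that the factor $|\Gamma|/|v^\Gamma|$ collapses correctly. Everything else follows from previously established results in this section.
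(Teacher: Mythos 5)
Your proposal is correct and follows essentially the same route as the paper: double counting the incidence set $\{(g,v) \mid v^g = v\}$, then collapsing $\sum_{v \in V} \vert \Gamma_v \vert$ orbit by orbit via the orbit-stabilizer lemma so that each orbit contributes $\vert \Gamma \vert$. The only cosmetic difference is that the paper justifies the constancy of $\vert \Gamma_v \vert$ on an orbit by conjugation of stabilizers before invoking orbit-stabilizer, whereas you substitute $\vert \Gamma_v \vert = \vert \Gamma \vert / \vert v^\Gamma \vert$ directly.
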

\begin{proof}
	Counting $g$ gives $\vert \lbrace (g, v) \mid g \in \Gamma, \, v \in V, \, v^g = v \rbrace \vert =  \sum_{g \in \Gamma} \vert \textnormal{fix}(g)\vert$. On the other hand counting $v$ gives $\vert \lbrace (g, v) \mid g \in \Gamma, \, v \in V, \, v^g = v \rbrace \vert =  \sum_{v \in V} \vert \Gamma_v \vert$. We have $\vert \Gamma_v \vert = \vert \Gamma_{v^g} \vert$ for any $g \in \Gamma$, by $h \mapsto ghg^{-1}$. And hence $\sum_{v \in v^\Gamma} \vert \Gamma_v \vert = \vert v^\Gamma \vert \vert \Gamma_v \vert = \vert \Gamma \vert$, where for the last equality we use the Orbit-stabilizer \autoref{orbitstabilizer}. Hence $\sum_{g \in \Gamma} \vert \textnormal{fix}(g)\vert = \sum_{v \in V} \vert \Gamma_v \vert = \vert \lbrace v^\Gamma \mid v \in V \rbrace \vert \vert \Gamma \vert$, which proves the claim.
\end{proof}

\chapter{Cayley graphs}	
\section{Vertex-transitive graphs}
We say a graph $G$ is \textit{vertex-tansitive}, if $\textnormal{Aut}(G)$ acts transitively on $V(G)$, i.e. for any $v,w \in V(G)$, we find $\rho \in \textnormal{Aut}(G)$, such that $v^\rho = w$. We will begin this section with some simple examples.\\\\
	For $k \in \mathbb{N}$ let the $\textit{k-cube}$ $Q_k$ be the graph with vertex set $\mathbb{Z}_2^k$ and $v \sim w$ if and only if for some $i, 1 \leq i \leq k: v_i \not= w_i$, but $v_j = w_j, \,$ for all $j, 1 \leq j \leq k,\, j\not=i$.
\begin{lemma}
	The $k$-cube $Q_k$ is vertex transitive.
\end{lemma}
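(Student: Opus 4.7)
The plan is to exhibit, for any pair of vertices $v, w \in \mathbb{Z}_2^k$, an explicit automorphism of $Q_k$ that maps $v$ to $w$. Since $V(Q_k) = \mathbb{Z}_2^k$ carries a natural group structure, the first thing I would try is translation: for each $a \in \mathbb{Z}_2^k$ define $\tau_a \colon \mathbb{Z}_2^k \to \mathbb{Z}_2^k$ by $\tau_a(v) := v + a$.

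First I would check that $\tau_a$ is a bijection on the vertex set, which is immediate because it has two-sided inverse $\tau_a$ itself (since $a + a = 0$ in $\mathbb{Z}_2^k$). Next I would verify that $\tau_a$ preserves the edge relation. This is the only substantive point, and it is easy: if $v \sim w$, then by definition there is a unique index $i$ with $v_i \neq w_i$ and $v_j = w_j$ for all $j \neq i$. Then $(v+a)_i = v_i + a_i \neq w_i + a_i = (w+a)_i$, while $(v+a)_j = v_j + a_j = w_j + a_j = (w+a)_j$ for every $j \neq i$, so $\tau_a(v) \sim \tau_a(w)$. The same argument applied to $\tau_a^{-1} = \tau_a$ shows $\tau_a$ is an automorphism of $Q_k$.

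Finally, given arbitrary $v, w \in V(Q_k)$, I would set $a := w - v \in \mathbb{Z}_2^k$ and observe $\tau_a(v) = v + (w - v) = w$, so $v^{\tau_a} = w$. Hence $\textnormal{Aut}(Q_k)$ acts transitively on $V(Q_k)$, which by definition means $Q_k$ is vertex-transitive.

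There is no real obstacle here; the only mild subtlety is the edge-preservation check, which reduces to the observation that translation by $a$ shifts each coordinate by a fixed element of $\mathbb{Z}_2$ and therefore preserves the set of coordinates at which two vectors disagree. (This is a special case of the general fact, relevant in the sequel, that left multiplication by a group element is always an automorphism of a Cayley graph, here $Q_k = \textnormal{Cay}(\mathbb{Z}_2^k, \{e_1, \ldots, e_k\})$.)
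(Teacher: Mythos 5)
Your proposal is correct and follows essentially the same route as the paper: translation by a group element $a \in \mathbb{Z}_2^k$ is an automorphism of $Q_k$, and translating by $w - v$ carries $v$ to $w$. The only difference is that you spell out the coordinate-wise check of edge preservation, which the paper states more briefly.
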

\begin{proof}
	For $v \in \mathbb{Z}_2^k$ consider the permutation of $V(Q_k)$, $\rho_v: w \mapsto w + v$. Since $w,u \in V(Q_k)$ differ in exactly one position if and only if $w+v$ and $u+v$ differ in exactly one position, $\rho$ is also an automorphism. The set $\Gamma = \lbrace \rho_v \mid v \in G \rbrace$ forms a subgroup of $\textnormal{Aut}(Q_k)$. It acts transitively, because for any $v,w \in V$ the map $\rho_{-v+w}$ sends $v$ to $w$. 
\end{proof}

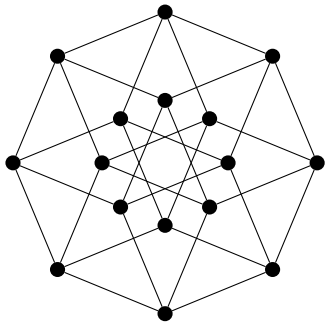
\begin{figure}[H]
\centering
\begin{tikzpicture}
[nodeDecorate/.style={shape=circle,inner sep=2pt,outer sep=0pt,fill},%
  lineDecorate/.style={-,thin},%
  scale=2]
\foreach \nodename/\x/\y in {1/1/0, 2/0.7071/0.7071, 3/0/1,
  4/-0.7071/0.7071, 5/-1/0, 6/-0.7071/-0.7071, 7/0/-1,
  8/0.7071/-0.7071, 9/0.4142/0, 10/0.2928/0.2928, 11/0/0.4142,
  12/-0.2928/0.2928, 13/-0.4142/0, 14/-0.2928/-0.2928, 15/0/-0.4142,
  16/0.2928/-0.2928}
{
  \node (\nodename) at (\x,\y) [nodeDecorate] {};
}
\path
\foreach \startnode/\endnode in {1/2, 1/8, 1/10, 1/16, 2/3, 2/9, 2/11,
  3/4, 3/10, 3/12, 4/5, 4/11, 4/13, 5/6, 5/12, 5/14, 6/7, 6/13, 6/15,
  7/8, 7/14, 7/16, 8/9, 8/15, 9/12, 9/14, 10/13, 10/15, 11/14, 11/16,
  12/15, 13/16}
{
  (\startnode) edge[lineDecorate] node {} (\endnode)
};
\end{tikzpicture}
 \caption{The $4$-cube $Q_4$}
\end{figure}

The \textit{cycle} on \textit{n} vertices is the graph $C_n$ with vertex set $\mathbb{Z}_n$ and $v \sim w$ if and only if $v-w \equiv \pm 1$ mod $n$. If $\rho \in 	\textnormal{Sym}(n)$, such that $\rho(v) \equiv v+1$ mod $n$, it follows that $\rho \in \textnormal{Aut}(C_n)$. Hence $\textnormal{Aut}(C_n)$ must contain the cyclic subgroup $H := \lbrace \rho^k \mid 0 \leq k \leq n-1 \rbrace.$ For any two $v,w \in V(C_n)$ one finds a $\rho^k \in H$, such that $\rho^k(v) = w$, i.e. $H$ acts transitively on $C_n$. 

\begin{definition}
\label{cayleydef}
Let $\Gamma$ be a group and let $S \subset \Gamma$ with $1 \not\in S$. The \textit{Cayley digraph} $\overrightarrow{\textnormal{Cay}}(\Gamma,S)$ is the graph with vertex set $\Gamma$ and edge set $\lbrace (g, gs) \mid g,s \in \Gamma \rbrace$. If, in addition, $S=S^{-1}$, the \textit{Cayley graph} ${\textnormal{Cay}}(\Gamma,S)$ is defined. On the other hand, if $S \cap S^{-1} = \emptyset$ and $S \cup S^{-1} = \Gamma \setminus \lbrace 1 \rbrace$, the \textit{Cayley tournament} $\overrightarrow{\textnormal{Cay}}_{\textnormal{tourn}}(\Gamma,S)$ is defined. In any case, the set $S$ is called the \textit{connection set}.
\end{definition}

It follows $g \sim h$ if and only if $g^{-1}h \in S$. If $1 \in S$, we would get loops since $g \sim g$, for all $g \in \Gamma$.

\begin{definition}
	A \textit{tournament} is a directed graph obtained by assigning a direction for each edge in an undirected complete graph. 
\end{definition}
\begin{remark}
\label{cayleytourn}
The Cayley tournament is a tournament.	
\end{remark}
\begin{proof}
	Let $\Gamma$ be a group. Let $S \subset \Gamma$ with $1 \not\in S$, $S \cap S^{-1} = \emptyset$, $S \cup S^{-1} = \Gamma \setminus \lbrace 1 \rbrace$. Let $g, h \in \Gamma, g \not = h$. The latter yields $g^{-1}h \not = 1$, hence $g^{-1}h \in S$ or $g^{-1}h \in S^{-1}$. Since $S \cap S^{-1} = \emptyset$, only one of both holds. The first one yields $g \sim h$. On the other hand, if $g^{-1}h \in S^{-1}$, we have $(g^{-1}h)^{-1} = h^{-1}g \in S$,  i.e. $h \sim g$. Hence, the corresponding Cayley tournament $\overrightarrow{\textnormal{Cay}}_{\textnormal{tourn}}(\Gamma,S)$ is a tournament.
	\end{proof}
Looking back on the preceding examples, we can now write
\[	Q_k = \textnormal{Cay}(\mathbb{Z}_2^k, S),\ S = \lbrace e_i \mid 1 \leq i \leq k \rbrace \] and
\[ C_n = \textnormal{Cay}(\mathbb{Z}_n,S),\ S = \lbrace \pm 1 \rbrace. \]
\begin{lemma}
\label{cayleytransitive}
	The Cayley digraph $\overrightarrow{\textnormal{Cay}}(\Gamma,S)$ is vertex transitive.
\end{lemma}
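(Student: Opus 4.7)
The plan is to exhibit, for each group element $a \in \Gamma$, a left-multiplication map and show that the collection of these maps is a transitive subgroup of $\textnormal{Aut}(\overrightarrow{\textnormal{Cay}}(\Gamma,S))$. Concretely, for $a \in \Gamma$ I would define $\rho_a \colon \Gamma \to \Gamma$ by $\rho_a(g) := ag$. This is a bijection on the vertex set, with inverse $\rho_{a^{-1}}$.

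Next I would verify that $\rho_a$ is a digraph-homomorphism. Using the characterisation stated right after \autoref{cayleydef}, namely that $g \sim h$ in $\overrightarrow{\textnormal{Cay}}(\Gamma,S)$ if and only if $g^{-1}h \in S$, I would compute
\[
\rho_a(g)^{-1}\rho_a(h) \;=\; (ag)^{-1}(ah) \;=\; g^{-1}a^{-1}ah \;=\; g^{-1}h,
\]
so $g \sim h$ implies $\rho_a(g) \sim \rho_a(h)$. Hence $\rho_a \in \textnormal{Aut}(\overrightarrow{\textnormal{Cay}}(\Gamma,S))$, and the set $\Gamma_L := \{\rho_a \mid a \in \Gamma\}$ is a subgroup of the automorphism group (it is closed under composition since $\rho_a \circ \rho_b = \rho_{ab}$, and contains the identity $\rho_1$).

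Finally, to establish transitivity, for any two vertices $g, h \in \Gamma$ I would take $a := hg^{-1}$ and observe $\rho_a(g) = hg^{-1}g = h$, so $g^{\rho_a} = h$. Therefore $\textnormal{Aut}(\overrightarrow{\textnormal{Cay}}(\Gamma,S))$ acts transitively on the vertex set, which is precisely the definition of vertex-transitivity.

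I do not expect any real obstacle here; the only subtlety is being careful about left versus right multiplication. Because the edge relation is defined via $g^{-1}h \in S$ (a left-invariant condition), left multiplication preserves adjacency, whereas right multiplication by $a$ would conjugate the test to $a^{-1}g^{-1}ha$, which is not obviously in $S$. So the key design choice is using $\rho_a(g) = ag$ rather than $g \mapsto ga$.
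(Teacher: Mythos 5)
Your proposal is correct and follows essentially the same route as the paper: left multiplication $\rho_a\colon g \mapsto ag$ preserves the adjacency test $g^{-1}h \in S$ exactly, these maps form a subgroup of $\textnormal{Aut}(\overrightarrow{\textnormal{Cay}}(\Gamma,S))$, and $\rho_{hg^{-1}}$ sends $g$ to $h$. Your closing remark about why left (rather than right) multiplication is the correct choice is a nice addition, but the argument itself matches the paper's proof step for step.
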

\begin{proof}
	For a fixed $w \in \Gamma$ consider the mapping $\rho_w: v \mapsto wv$. It has the inverse $\rho_{w^{-1}}$, hence it is a permutation. Since $(\rho_w(v))^{-1}\rho_w(u) = (wv)^{-1}(wu) = v^{-1}w^{-1}wu = v^{-1}u$, it follows $\rho_w(v) \sim \rho_w(u)$ if and only if $v \sim u$. Thus, $\lbrace \rho_w \mid w \in \Gamma \rbrace$ forms a subgroup of $\textnormal{Aut}(\overrightarrow{\textnormal{Cay}}(\Gamma,S))$. For any $v,w \in \Gamma$, $\rho_{wv^{-1}}(v) = wv^{-1}v = w$, which proves transitivity.
\end{proof}

Note that this is true also for the Cayley graphs and the Cayley tournaments, because they are also Cayley digraphs.

Does a vertex-transitive (di)graph exist, that is not a Cayley (di)graph? The answer is yes. We will give prove to this later, when we have stronger tools available. One then can also ask for which $n \in \mathbb{N}$ precisely, there exists a vertex-transitive graph of order $n$, that is not a Cayley graph. 
 
  Beforehand, let us survey, that every connected vertex-transitive graph is a $\textit{retract}$ of a Cayley graph.

\section{Rectracts}

\begin{definition}
	Let $G$ be a graph and $H \subseteq G$ a subgraph. $H$ is called a $\textit{retract}$, if there exists a homomorphism $\varphi \colon G \rightarrow H$, such that $\varphi \downharpoonright H = \textnormal{id}_H$.
\end{definition}
	
\begin{lemma}
\label{retract}
	Any undirected, connected and vertex-transitive graph is a retract of a Cayley graph.
\end{lemma}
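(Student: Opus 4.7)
The plan is to realise $G$ as a retract of a Cayley graph on $\Gamma := \textnormal{Aut}(G)$. Fix a basepoint $v_0 \in V(G)$ and set
\[ S := \{ g \in \Gamma \mid v_0 \sim v_0^g \}. \]
I first verify that $S$ is admissible: $1 \notin S$ because $G$ has no loops, and $S = S^{-1}$ because $G$ is undirected and every $g \in \Gamma$ preserves adjacency, so $v_0 \sim v_0^g$ forces $v_0^{g^{-1}} \sim v_0$. Put $C := \textnormal{Cay}(\Gamma, S)$.

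Next consider $\pi \colon V(C) \to V(G), \, g \mapsto v_0^g$. It is surjective by vertex-transitivity. It is a graph homomorphism: if $g^{-1}h \in S$ then $v_0 \sim v_0^{g^{-1}h}$, and applying the automorphism $g$ to this adjacency yields $v_0^g \sim v_0^{g \cdot g^{-1}h} = v_0^h$, so $\pi(g) \sim \pi(h)$.

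The substantive step is to embed $G$ as a subgraph of $C$. By vertex-transitivity, choose a section $s \colon V(G) \to \Gamma$ with $v_0^{s(v)} = v$ for every $v$, and consider the image $s(V(G))$. The key observation is
\[ s(v) \sim s(w) \textnormal{ in } C \iff v \sim w \textnormal{ in } G. \]
Indeed the left side unfolds to $v_0 \sim v_0^{s(v)^{-1}s(w)}$, and applying the automorphism $s(v)$ converts this into $v_0^{s(v)} \sim v_0^{s(v) \cdot s(v)^{-1}s(w)} = v_0^{s(w)}$, i.e.\ $v \sim w$. Hence the induced subgraph $G' := C[s(V(G))]$ is isomorphic to $G$ via $s$, and the composition $\rho := s \circ \pi \colon C \to G'$ is a graph homomorphism satisfying $\rho(s(v)) = s(v_0^{s(v)}) = s(v)$. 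Thus $\rho \downharpoonright G' = \textnormal{id}_{G'}$, exhibiting $G'$ as a retract of $C$.

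The only real subtlety is the equivalence in step three; its verification requires careful bookkeeping with the action convention $(v^a)^b = v^{ba}$ implicit in Section 2.1, and the reverse convention would leave the expression tangled. The connectedness hypothesis on $G$ does not actually enter the construction above; presumably it is included so that the Cayley graph $C$ itself is connected — equivalently, that $S$ generates $\Gamma$ — which follows by a short induction on graph-distance from $v_0$.
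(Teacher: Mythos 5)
Your construction is correct and is essentially the paper's own proof in a cleaner dress: the same connection set $S=\{g\mid v_0\sim v_0^g\}$, the same orbit map $g\mapsto v_0^g$, and your section $s$ is exactly the paper's ``restriction to representatives of the left cosets of $\Gamma_{v_0}$'', the only cosmetic difference being that you build the Cayley graph on all of $\textnormal{Aut}(G)$ while the paper takes $\Gamma=\langle S\rangle$ and uses connectedness to get transitivity of that subgroup. The one inaccurate aside is the final parenthetical: with $\Gamma=\textnormal{Aut}(G)$, connectedness of $G$ only guarantees that $\langle S\rangle$ acts transitively on $V(G)$, not that $S$ generates all of $\textnormal{Aut}(G)$, so it is not ``equivalent'' to connectedness of $C$ --- which is precisely why the paper works with $\langle S\rangle$ from the outset.
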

\begin{proof}
	Let $G$ be a connected vertex-transitive graph. For a fixed $v \in V(G)$ consider 
	\[ S := \lbrace g \in \textnormal{Aut}(G) \mid  v \sim v^g \rbrace.\]
	 Because we do not allow cycles, $id \not \in S$. If $g \in S$, $v \sim v^g$, and therefore also $v^{g^{-1}} \sim (v^g)^{g^{-1}} = v$. Since G is undirected, also $v \sim v^{g^{-1}}$ and hence $g^{-1} \in S$. Let 
	 \[ \Gamma := \langle S \rangle \]
	  be the group generated by $S$. By induction, show that $\Gamma$ acts transitively on $V(G)$. Then \autoref{bijec} yields a bijection $\phi$ between the left cosets of $\Gamma_v$ in $\Gamma$ and $V(G)$. We define the map 
	  \[ \varphi \colon \textnormal{Cay}(\Gamma,S) \rightarrow G,\ g \mapsto \phi(g\Gamma_v). \]
	   It holds $S = \Gamma_v S \Gamma_v$. For that, consider $g\in S,\ h,h' \in \Gamma_v$. Then $v = v^h \sim (v^g)^h = v^{hg} = (v^{h'})^{hg} = v^{hgh'}$, therefore $hgh' \in S$. On the other hand, $S \subset \Gamma_v S \Gamma_v$, because $\textnormal{id} \in \Gamma_v$. To verify that $\varphi$ is a homomorphism, consider $ag \in a\Gamma_v$ and $bh \in b\Gamma_v$. Then $ag \sim bh$ if (and only if) $(ag)^{-1}bh = g^{-1}a^{-1}bh \in S$ if (and only if) $a^{-1}b \in S$. The last equivalence follows from the fact that $g \in \Gamma_v$ if and only if $g^{-1} \in \Gamma_v$ and $S = \Gamma_v S \Gamma_v$. Hence, between two distinct left cosets, there is no edge at all or an edge between any two elements in different cosets. Furthermore, between elements of one particular left coset, there is no edge, because $id \not \in S$. We still have to verify that $\phi$ is an homomorphism, i.e. if two left cosets $g\Gamma_v$ and $h\Gamma_v$ of $\Gamma_v$ are completely adjacent, also $w \sim w'$, with $w=\phi(g\Gamma_v), w'=\phi(h\Gamma_v) \in V(G)$. Since $\Gamma$ acts transitively on $V$, we find $a,b \in \Gamma$, such that $w = v^a$ and $w' = v^b$. Hence, $w \sim w'$ if (and only if) $v^a \sim v^b$ if (and only if) $v \sim v^{a^{-1}b}$ if (and only if) $a^{-1}b \in S$. Now use the fact that $g\Gamma_v$ consists exactly of those $a \in \Gamma$, such that $v^a = w$ and $h\Gamma_v$ of those $b\in \Gamma$, such that $v^b= w'$.
	   
	   If we restrict $\varphi$ to representatives of the left cosets, we obtain $id_G$, which proves the claim. 
	\end{proof}
\nocite{hahntardif}

\begin{definition}
	Let $\Gamma$ be a group, $\Gamma_0 \subseteq \Gamma$ a subgroup, and $S \subseteq \Gamma$, such that $1 \not \in S$ and $S=S^{-1}$. The $\textit{Cayley coset graph}$ $\textnormal{Ccg}(\Gamma,S,\Gamma_0)$ is defined by $V(\textnormal{Ccg}(\Gamma,S,\Gamma_0)) = \lbrace a\Gamma_0 \mid a \in \Gamma \rbrace$ and $a\Gamma_0 \sim b\Gamma_0$ if and only if $(ag)^{-1}bh \in S$ for some $g,h \in \Gamma_0$.
\end{definition}

With the conditions of the previous proof, set $\Gamma_0 := \Gamma_v$ to obtain the following Corollary.

\begin{corollary}[Sabidussi \cite{sabidussicoset}]
	Every undirected, connected vertex-\\transitive graph is isomorph to a Cayley coset graph.\qed
\end{corollary}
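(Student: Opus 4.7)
The proof plan follows the hint and recycles the construction from \autoref{retract}. Let $G$ be any undirected, connected, vertex-transitive graph, fix a vertex $v\in V(G)$, and reintroduce exactly the same objects as before: the connection set $S=\{g\in\operatorname{Aut}(G)\mid v\sim v^g\}$, the subgroup $\Gamma=\langle S\rangle\le\operatorname{Aut}(G)$, and now $\Gamma_0:=\Gamma_v$. From the previous proof I already have the following facts for free: $1\notin S$, $S=S^{-1}$, $\Gamma$ acts transitively on $V(G)$, and the crucial identity $\Gamma_v S\,\Gamma_v=S$. So the Cayley coset graph $\operatorname{Ccg}(\Gamma,S,\Gamma_v)$ is well-defined.

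The candidate isomorphism is the obvious one: define $\phi\colon V(\operatorname{Ccg}(\Gamma,S,\Gamma_v))\to V(G)$ by $a\Gamma_v\mapsto v^a$. First I would verify that $\phi$ is well-defined and bijective. Well-definedness is the observation that if $b=ah$ with $h\in\Gamma_v$, then $v^b=v^{ah}=(v^h)^a=v^a$. Bijectivity is precisely \autoref{bijec} together with transitivity of $\Gamma$ on $V(G)$, which says that the left cosets of $\Gamma_v$ are in bijection with the orbit $v^\Gamma=V(G)$ via exactly this rule.

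Next I would check that $\phi$ preserves adjacency in both directions. On the Ccg side, $a\Gamma_v\sim b\Gamma_v$ means $(ag)^{-1}bh=g^{-1}(a^{-1}b)h\in S$ for some $g,h\in\Gamma_v$, which is to say $a^{-1}b\in\Gamma_v S\,\Gamma_v$. On the $G$ side, using that $a^{-1}\in\Gamma\le\operatorname{Aut}(G)$ and that $(v^a)^{a^{-1}}=v$, $(v^b)^{a^{-1}}=v^{a^{-1}b}$, we have $v^a\sim v^b$ iff $v\sim v^{a^{-1}b}$ iff $a^{-1}b\in S$ by the very definition of $S$. Invoking the identity $\Gamma_v S\,\Gamma_v=S$ established in the previous proof, the two conditions coincide, so $\phi$ is an isomorphism.

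The main obstacle, and the only genuinely substantive step, is the identity $\Gamma_v S\,\Gamma_v=S$; but this is already in hand from the proof of \autoref{retract}, so the present argument is purely bookkeeping. I would also need to be mindful of the paper's conventions for left cosets and for the right-exponent action, since a slip in either direction would make $\phi$ appear to map into the wrong coset; a short self-check confirming $(v^g)^h=v^{hg}$ (as used above) and $bH=\{bh\mid h\in H\}$ suffices to keep the computations consistent.
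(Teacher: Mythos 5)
Your proposal is correct and takes essentially the same route as the paper, which proves this corollary simply by setting $\Gamma_0:=\Gamma_v$ in the construction from the proof of \autoref{retract}. You spell out the details the paper leaves implicit (well-definedness and bijectivity of $a\Gamma_v\mapsto v^a$ via \autoref{bijec}, and the adjacency check through the identity $\Gamma_v S\,\Gamma_v=S$), all consistently with the paper's conventions $(v^h)^g=v^{gh}$ and $bH=\{bh\mid h\in H\}$.
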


\section{Recognition}
A permutation group $\Gamma$ acting on a set $V$ is called $\textit{semiregular}$ if the stabilisator is trivial for every $v \in V$. It is called $\textit{regular}$, if it is semiregular and transitive. Recall \autoref{orbitstabilizer} to see that, if $\Gamma$ is semiregular, $\vert v^{\Gamma} \vert = \vert \Gamma \vert$. If $\Gamma$ is also transitive, it exists only one orbit, namely $V$. Hence $\vert V \vert = \vert \Gamma \vert$.\\
A \textit{group isomorphism} is a bijective map $\varphi \colon \Gamma \rightarrow \Gamma'$ between two groups $\Gamma, \Gamma'$, such that $\varphi(gh) = \varphi(g)\varphi(h)\ \textnormal{for all}\ g,h \in \Gamma$. To indicate, that such a map exists, we write $\Gamma \cong \Gamma'$. If $\Gamma = \Gamma'$, such a map is called $\textit{group automorphism}$.

\begin{lemma}
\label{regularsub}
	Let $\overrightarrow{\textnormal{Cay}}(\Gamma,S)$ be a Cayley digraph. There exists a regular subgroup of $\textnormal{Aut}(\overrightarrow{\textnormal{Cay}}(\Gamma,S))$ isomorphic to $\Gamma$.
\end{lemma}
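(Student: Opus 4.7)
The plan is to take the natural candidate already appearing in the proof of \autoref{cayleytransitive}, namely the subgroup
\[ R := \{ \rho_w \mid w \in \Gamma \} \subseteq \textnormal{Aut}(\overrightarrow{\textnormal{Cay}}(\Gamma,S)), \]
where $\rho_w \colon v \mapsto wv$, and to verify that $R$ is (i) a subgroup of the automorphism group isomorphic to $\Gamma$ and (ii) regular.

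First I would recall from \autoref{cayleytransitive} that each $\rho_w$ is an automorphism of $\overrightarrow{\textnormal{Cay}}(\Gamma,S)$ and that $R$ acts transitively on $\Gamma = V(\overrightarrow{\textnormal{Cay}}(\Gamma,S))$. Next I would exhibit the group isomorphism $\Gamma \to R,\ w \mapsto \rho_w$. The homomorphism property follows from the associativity in $\Gamma$: for $u,v,w \in \Gamma$ we have $\rho_{wv}(u) = (wv)u = w(vu) = \rho_w(\rho_v(u))$, so $\rho_{wv} = \rho_w \circ \rho_v$. Injectivity is immediate by evaluating at $1 \in \Gamma$, since $\rho_w(1) = w$, and surjectivity is by definition of $R$. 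This simultaneously confirms that $R$ is a subgroup (being the image of a group homomorphism) and delivers $R \cong \Gamma$.

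It remains to check regularity. Transitivity is already known, so it suffices to verify semiregularity. If $\rho_w \in R_v$ for some $v \in \Gamma$, then $wv = \rho_w(v) = v$, hence $w = 1$ and $\rho_w = \textnormal{id}$. Therefore every stabiliser $R_v$ is trivial, so $R$ is semiregular, and together with transitivity it is regular, which concludes the proof.

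I do not expect a real obstacle here; the statement is essentially bookkeeping on top of \autoref{cayleytransitive}. The only point deserving care is making sure the evaluation trick $\rho_w(1) = w$ is used both for injectivity of $w \mapsto \rho_w$ and for semiregularity, which are the two places where the identity $1 \in \Gamma$ plays a distinguished role.
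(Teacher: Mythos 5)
Your proposal is correct and follows essentially the same route as the paper: both take the left-translations $\rho_w$ from the proof of \autoref{cayleytransitive}, establish the isomorphism between $\Gamma$ and $\{\rho_w \mid w \in \Gamma\}$ via the homomorphism property and evaluation at the identity, and conclude regularity from transitivity plus the triviality of the stabilisers. The only cosmetic difference is that the paper writes the isomorphism in the direction $\rho_w \mapsto w$ while you use $w \mapsto \rho_w$.
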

\begin{proof}
	In the proof of \autoref{cayleytransitive}, we have seen that the maps ($\rho_w: \Gamma \rightarrow \Gamma, v \mapsto wv)_{w\in \Gamma}$ form a subgroup of $\textnormal{Aut}(\overrightarrow{\textnormal{Cay}}(\Gamma,S))$ acting transitively on $\Gamma$. The subgroup is isomorphic to $\Gamma$ by $\rho_w \overset{\varphi}{\mapsto} w$, since we have $\varphi(\rho_u\rho_w) = \varphi(\rho_{uw}) = uw = \varphi(\rho_u)\varphi(\rho_w)$ and definition yields that the kernel is trivial and $\varphi$ is surjective. The only element fixing vertices is $\rho_{\mathbbm{1}}=\textnormal{id}_\Gamma$.
\end{proof}

\begin{lemma}
	Let $G$ be an undirected graph and $\Gamma \subseteq \textnormal{Aut}(G)$ a regular subgroup acting on $G$. Then there exists a Cayley graph $\textnormal{Cay}(\Gamma,S)$ isomorphic to $G$. Furthermore, if $G$ is a tournament and $\Gamma \subseteq \textnormal{Aut}(G)$ a regular subgroup acting on $G$, there exists a Cayley tournament $\overrightarrow{\textnormal{Cay}}_{\textnormal{tourn}}(\Gamma,S)$ isomorphic to $G$. 
\end{lemma}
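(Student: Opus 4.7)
The plan is to use the regular action of $\Gamma$ to label the vertices of $G$ by group elements, and then read off the connection set from the neighbourhood of a distinguished vertex.

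First I would fix a base point $v_0 \in V(G)$ and consider the map $\Phi \colon \Gamma \rightarrow V(G),\ g \mapsto v_0^g$. Since $\Gamma$ acts transitively, $\Phi$ is surjective, and since $\Gamma$ is semiregular, the stabiliser $\Gamma_{v_0}$ is trivial, so \autoref{bijec} (or directly \autoref{orbitstabilizer}) forces $\Phi$ to be a bijection. Next I would define the candidate connection set
\[
S := \{ g \in \Gamma \setminus \{1\} \mid v_0 \sim v_0^g \}.
\]
Because $v_0 \not\sim v_0$ in a simple (di)graph, automatically $1 \notin S$.

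The central verification is the adjacency equivalence $v_0^g \sim v_0^h \iff g^{-1}h \in S$. For this I would apply the automorphism $g^{-1} \in \Gamma \subseteq \textnormal{Aut}(G)$ to the edge (or non-edge) $v_0^g \sim v_0^h$, obtaining $v_0 \sim v_0^{g^{-1}h}$, which by definition means $g^{-1}h \in S$; the reverse implication is the same argument with $g$ applied. This shows $\Phi$ is a (di)graph isomorphism onto $\overrightarrow{\textnormal{Cay}}(\Gamma,S)$, provided $S$ has the structural properties required by \autoref{cayleydef}.

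Finally I would check those structural properties in the two cases. If $G$ is undirected, then $v_0 \sim v_0^g$ implies $v_0^{g^{-1}} \sim v_0$ (applying $g^{-1}$) and, by undirectedness, $v_0 \sim v_0^{g^{-1}}$, so $g \in S \Rightarrow g^{-1} \in S$, giving $S = S^{-1}$ and hence a Cayley graph. If $G$ is a tournament, then for every pair $g \neq h$ exactly one of $v_0^g \sim v_0^h$ or $v_0^h \sim v_0^g$ holds; specialising to $g = 1$, exactly one of $g \in S$ or $g^{-1} \in S$ holds for every $g \in \Gamma \setminus \{1\}$, which is precisely $S \cap S^{-1} = \emptyset$ and $S \cup S^{-1} = \Gamma \setminus \{1\}$, giving a Cayley tournament. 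I expect the only mildly delicate step to be the tournament case, where one has to be careful that the underlying complete graph structure, translated via $\Phi$, really forces the dichotomy $g \in S$ XOR $g^{-1} \in S$; everything else is routine once the bijection $\Phi$ is in place.
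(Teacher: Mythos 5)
Your proposal is correct and follows essentially the same route as the paper's own proof: you fix a base vertex, use regularity to identify $\Gamma$ with $V(G)$ (your $\Phi$ is the inverse of the paper's $w \mapsto \rho_w$), take $S$ to be the elements moving the base vertex to an out-neighbour, and verify the adjacency equivalence by translating with $g^{-1}$. The checks that $S=S^{-1}$ in the undirected case and that $g \in S$ XOR $g^{-1} \in S$ in the tournament case are exactly the ones the paper carries out.
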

\begin{proof}
	Let $v \in V(G)$ be fixed. For any $w \in V(G)$ we find an unique $\rho_w \in \Gamma$, such that $v^{\rho_w} = w$, since $\Gamma$ is transitive. Consider $S := \lbrace \rho_w \mid v \sim v^{\rho_w} = w \rbrace$. We have $\textnormal{id}_G \not\in S$, because this yields $v \sim v$. Moreover, $S=S^{-1}$, if $G$ is undirected, because $v \sim v^{\rho_w} \ \textnormal{if and only if} \ v^{\rho_w^{-1}} \sim (v^{\rho_w})^{\rho_w^{-1}} = v$	. On the other hand, if $G$ is a tournament, it also holds $v^{\rho_w^{-1}} \sim v\ \textnormal{if and only if}\ v \not\sim v^{\rho_w^{-1}}$. Therefore $S \cap S^{-1}= \emptyset $ and $S \cup S^{-1} = \Gamma \setminus \lbrace 1 \rbrace$. Now let $w,u \in V(G)$. Since $\rho_w^{-1} \in \textnormal{Aut}(G)$, it holds 
	\begin{align*}
		w \sim u \ &\textnormal{if and only if}\  v = (v^{\rho_w})^{\rho_w^{-1}} = w^{\rho_w^{-1}} \sim u^{\rho_w^{-1}} = (v^{\rho_u})^{\rho_w^{-1}} = v^{\rho_w^{-1}\rho_u}\\ &\textnormal{if and only if} \ \rho_w^{-1}\rho_u \in S.
	\end{align*}
	Hence, by identifying every $w \in V(G)$ with $\rho_w \in \Gamma$, we obtain an isomorphism between $G$ and $\textnormal{Cay}(\Gamma,S)$ or $\overrightarrow{\textnormal{Cay}}_{\textnormal{tourn}}(\Gamma,S)$, respectively.
\end{proof}

This yields the result of Sabidussi, a fundamental tool to recognize Cayley graphs.

\begin{corollary}[Sabidussi\cite{sabidussisub}]
\label{sabidussi}
	An undirected graph $G$ is isomorphic to a Cayley graph if and only if $\textnormal{Aut}(G)$ contains a regular subgroup. Moreover, a tournament $G$ is isomorphic to a Cayley tournament if and only if $\textnormal{Aut}(G)$ contains a regular subgroup. \qed
\end{corollary}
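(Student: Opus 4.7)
The statement follows immediately from combining the two preceding lemmas, so my plan is just to assemble them carefully and take care of the tournament case in parallel with the undirected case.

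For the "if" direction, I assume $\textnormal{Aut}(G)$ contains a regular subgroup $\Gamma$. The lemma directly preceding the corollary then produces the desired isomorphism: in the undirected case we obtain $G \cong \textnormal{Cay}(\Gamma, S)$ for a suitable symmetric connection set $S$, and in the tournament case we obtain $G \cong \overrightarrow{\textnormal{Cay}}_{\textnormal{tourn}}(\Gamma, S)$ for a connection set satisfying $S \cap S^{-1} = \emptyset$ and $S \cup S^{-1} = \Gamma \setminus \lbrace 1 \rbrace$. Nothing further needs to be checked here; the lemma does the work and it already distinguishes the two cases.

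For the "only if" direction, suppose $G$ is isomorphic to a Cayley graph $\textnormal{Cay}(\Gamma, S)$, or respectively to a Cayley tournament $\overrightarrow{\textnormal{Cay}}_{\textnormal{tourn}}(\Gamma, S)$, via some graph isomorphism $\psi$. By \autoref{regularsub}, $\textnormal{Aut}(\textnormal{Cay}(\Gamma,S))$ contains a regular subgroup isomorphic to $\Gamma$, namely the group of left translations $\lbrace \rho_w \mid w \in \Gamma \rbrace$. I would then transport this regular subgroup across $\psi$: the conjugate $\lbrace \psi^{-1} \circ \rho_w \circ \psi \mid w \in \Gamma \rbrace$ is a subgroup of $\textnormal{Aut}(G)$, and conjugation by a bijection preserves both transitivity and the property of having trivial point stabilizers. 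Hence it is regular. The Cayley tournament is just a particular kind of Cayley digraph, so \autoref{regularsub} applies verbatim to it as well, and the same conjugation trick works.

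I do not expect any real obstacle; the only subtlety is to notice that \autoref{regularsub} is phrased for Cayley digraphs, so it covers both the undirected Cayley graph and the Cayley tournament case simultaneously, and to observe that regularity of a permutation group transfers along a bijective intertwining map. Since the argument is only a direct assembly of the two previous lemmas, a one-line proof pointing to them is essentially all that is required, which also explains why the authors ended the statement with \qed and no body.
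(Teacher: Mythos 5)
Your proposal is correct and matches the paper's intent exactly: the paper gives no proof body for this corollary precisely because it is the immediate assembly of \autoref{regularsub} and the lemma directly preceding the corollary, which is what you do. Your extra observation that the regular subgroup must be conjugated across the isomorphism $\psi$ into $\textnormal{Aut}(G)$ is a detail the paper leaves implicit, and you handle it correctly.
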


We are now able to prove our claim that there exists a vertex-transitive graph which is not a Cayley graph. For that, let $v,k,i \in \mathbb{N}$, such that $v\geq k \geq i$ and $\Omega$ be a fixed set of size v. The graph $J(v,k,i)$ is defined to have the vertex set $V := \lbrace M \in \mathcal{P}(\Omega) \mid |M| = k \rbrace$ and edge set $E := \lbrace \lbrace M, N \rbrace \mid |M \cap N| = i \rbrace$. The graph $J(5,2,0)$ is called $\textit{Petersen graph}$. Its vertices are the $2$-element subsets of $\lbrace 1, ..., 5 \rbrace$. Every $\rho \in S_5$ induces a permutation $\rho'$ on $V(J(5,2,0))$. Furthermore, each $\rho'$ is an automorphism, since $\lbrace a,a' \rbrace, \lbrace b,b' \rbrace \subseteq V(J(5,2,0))$ are distinct if and only if $\rho'(\lbrace a,a' \rbrace) = \lbrace \rho(a), \rho(a') \rbrace$ and $\rho'(\lbrace b,b' \rbrace) = \lbrace \rho(b), \rho(b') \rbrace$ are distinct. This holds, since otherwise $\rho$ would not be injective. Hence, with $\varphi \colon S_5 \mapsto \textnormal{Aut}(J(5,2,0)),\ \rho \mapsto \rho'$, we have $\varphi(S_5) < \textnormal{Aut}(J(5,2,0))$. This yields the vertex-transitivity, since for any $a,a',b,b' \in \lbrace 1,...,5 \rbrace$, pairwise differing, we find an unique $\rho \in S_5$, such that $\rho(a)=b,\ \rho(a')=b'$, i.e. $\rho' \in \varphi(S_5)$, such that $\rho'(\lbrace a,a' \rbrace) = \lbrace b,b' \rbrace$.

Assume the Petersen graph is a Cayley graph. Since the Petersen graph has order 10, \autoref{regularsub} yields, that we find a regular subgroup $\Gamma$ of $\textnormal{Aut}(J(5,2,0))$\\$= S_5$ of order $10$.  But every involution in $\Gamma \subseteq S_5$, that is an element of order $2$, fixes a vertex. This is a contradiction to the regularity of $\Gamma$. Hence, the Petersen graph is a vertex-transitive graph that is not a Cayley graph.

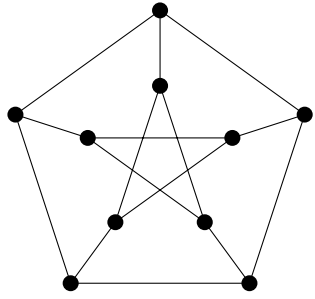
\begin{figure*}
	\begin{center}
\begin{tikzpicture}[]
\draw (18:2cm) -- (90:2cm) -- (162:2cm) -- (234:2cm) -- (306:2cm) -- cycle;
\draw (18:1cm) -- (162:1cm) -- (306:1cm) -- (90:1cm) -- (234:1cm) -- cycle;
\foreach \x in {18,90,162,234,306}{
\draw (\x:1cm) -- (\x:2cm);
\fill (\x:2cm) circle (3pt);
\fill (\x:1cm) circle (3pt);
}
\end{tikzpicture}
\end{center}
\caption{The Petersen graph $J(5,2,0)$.}
\end{figure*}

\section{Isomorphism}
A nearby problem is determining when two Cayley graphs are isomorphic. In this section we will restrict us to the case in which the Cayley graphs are defined on the same group.
\begin{lemma}
\label{autcay}
	Let $\Gamma$ be a group and $\varphi$ a group automorphism of $\Gamma$. Then $\overrightarrow{\textnormal{Cay}}(\Gamma,S)  \cong \overrightarrow{\textnormal{Cay}}(\Gamma,\varphi(S))$. In addition, also $\textnormal{Cay}(\Gamma,S)  \cong \textnormal{Cay}(\Gamma,\varphi(S))$ and $\overrightarrow{\textnormal{Cay}}_{tourn}(\Gamma,S)  \cong \overrightarrow{\textnormal{Cay}}_{tourn}(\Gamma,\varphi(S))$.
\end{lemma}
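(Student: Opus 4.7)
The plan is to use the group automorphism $\varphi$ itself as the underlying vertex map of the graph isomorphism, and verify that this map respects the three ingredients of \autoref{cayleydef}: the absence of $1$ from the connection set, the edge relation, and (in the undirected or tournament case) the symmetry condition on the connection set.

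First I would observe that $\varphi(S)$ is a valid connection set. Since $\varphi$ is a bijection with $\varphi(1)=1$ and $1\notin S$, we have $1\notin \varphi(S)$. For the undirected case, $\varphi(S)^{-1}=\varphi(S^{-1})=\varphi(S)$ using that $\varphi$ is a homomorphism and $S=S^{-1}$. For the tournament case, $\varphi(S)\cap \varphi(S)^{-1}=\varphi(S\cap S^{-1})=\varphi(\emptyset)=\emptyset$ by injectivity, and $\varphi(S)\cup\varphi(S)^{-1}=\varphi(S\cup S^{-1})=\varphi(\Gamma\setminus\{1\})=\Gamma\setminus\{1\}$ by bijectivity. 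So the three target objects $\overrightarrow{\textnormal{Cay}}(\Gamma,\varphi(S))$, $\textnormal{Cay}(\Gamma,\varphi(S))$, $\overrightarrow{\textnormal{Cay}}_{\textnormal{tourn}}(\Gamma,\varphi(S))$ are well-defined under the respective hypotheses.

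Next I would check the edge preservation. Regarding $\varphi$ as a map $\Gamma\to\Gamma$ on vertices, for any $g,h\in\Gamma$ we compute
\[
\varphi(g)^{-1}\varphi(h)=\varphi(g^{-1})\varphi(h)=\varphi(g^{-1}h),
\]
using that $\varphi$ is a group homomorphism. Hence $g^{-1}h\in S$ if and only if $\varphi(g^{-1}h)\in\varphi(S)$ (by bijectivity of $\varphi$), i.e. $g\sim h$ in $\overrightarrow{\textnormal{Cay}}(\Gamma,S)$ if and only if $\varphi(g)\sim\varphi(h)$ in $\overrightarrow{\textnormal{Cay}}(\Gamma,\varphi(S))$. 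Since $\varphi$ is a bijection on $\Gamma=V$, this gives a graph isomorphism. The same calculation covers the undirected and the tournament variants simultaneously, because the only structural datum used is that $g\sim h\Leftrightarrow g^{-1}h\in S$, which holds in all three cases.

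There is no real obstacle here; the whole argument rests on $\varphi$ being both a bijection of sets and a homomorphism of groups, which is exactly what a group automorphism provides. The only thing to be careful about is not to confuse this statement with the deeper question of when a \emph{graph} isomorphism between two Cayley graphs must come from a group automorphism (the CI-property discussed in the introduction); here we are only claiming one direction, which is elementary.
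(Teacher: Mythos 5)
Your proof is correct and follows essentially the same route as the paper's: it verifies that $\varphi(S)$ is a valid connection set in each of the three cases and then uses the identity $\varphi(g)^{-1}\varphi(h)=\varphi(g^{-1}h)$ to show that $\varphi$ itself is the desired graph isomorphism. If anything, your treatment of the well-definedness checks is slightly more explicit than the paper's, but the argument is the same.
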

\begin{proof}
	First, let us verify that $\overrightarrow{\textnormal{Cay}}(\Gamma,\varphi(S))$ is well defined. Since $\varphi$ is an automorphism, if $1 = \varphi(s) \in \varphi(S)$, it follows $S \ni s=1$, which is a contradiction. For the other both cases note that we also have $g \in S$ if and only if $\varphi(g) \in \varphi(S)$ and $g^{-1} \in S$ if and only if $\varphi(S) \ni \varphi(g^{-1}) = \varphi(g)^{-1}$. Now we can use this with $g \in S$ if and only if $g^{-1} \in S$ and $g\in S$ if and only if $g^{-1} \not\in S$, respectively. Finally, let $g,h \in \Gamma$. We obtain $g \sim h \ \textnormal{if and only if} \ g^{-1}h \in S\textnormal{ if and only if} \ \varphi(S) \ni  \varphi(g^{-1}h) = \varphi(g)^{-1}\varphi(h)  \ \textnormal{if and only if} \ \varphi(g) \sim \varphi(h)$. Hence $\varphi$ is an isomorphism in all three cases.
\end{proof}

A group action of a group $\Gamma$ on a set $V$ is said to be \textit{doubly-transitive} or \textit{2-transitive}, if for any $v,v',w,w' \in V,\ v \not= v', w \not = w'$ there exists $\rho \in \Gamma$, such that $v^\rho=w$ and $v'^\rho=w'$.

\begin{lemma}[Burnside \cite{burnside}]
\label{burnsidelem}
	Let $p \in \mathbb{P}$ and $\Gamma$ be a transitive subgroup of $\textnormal{Sym}(p)$. Then either $\Gamma$ is doubly transitive or $\Gamma$ contains a normal Sylow $p$-subgroup of order $p$.
\end{lemma}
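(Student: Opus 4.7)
The plan is to use Sylow theory and the structure of regular $p$-subgroups. Since $|\textnormal{Sym}(p)|=p!$ has $p$-part equal to $p$, and transitivity together with the Orbit--Stabilizer \autoref{orbitstabilizer} forces $p \mid |\Gamma|$, every Sylow $p$-subgroup $P$ of $\Gamma$ has order exactly $p$ and is therefore cyclic, generated by an element of order $p$ in $\textnormal{Sym}(p)$, i.e.\ a single $p$-cycle. In particular $P$ acts regularly on $\Omega$, so I would identify $\Omega$ with $\mathbb{Z}_p$ in such a way that $P = \langle \sigma \rangle$ with $\sigma \colon i \mapsto i+1$.

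If $P$ is the unique Sylow $p$-subgroup, then $P \triangleleft \Gamma$ and the second alternative holds, so I would focus on the remaining case. Suppose $P$ is not normal. Sylow's theorem then gives $n_p \equiv 1 \pmod{p}$ with $n_p > 1$, so $n_p \geq p+1$, and any two distinct Sylow $p$-subgroups intersect trivially (both having prime order $p$), yielding exactly $n_p(p-1)$ fixed-point-free $p$-cycles in $\Gamma$. To describe $N_\Gamma(P)$, note that any $g \in N_\Gamma(P)$ satisfies $g\sigma g^{-1} = \sigma^{a}$ for some $a \in \mathbb{Z}_p^\times$; evaluating at $i$ gives $g(i+1) = g(i)+a$, and by induction $g(i) = g(0) + a i$. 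Hence $N_\Gamma(P)$ embeds into $\textnormal{AGL}(1,p)$, and the intersection $N_\Gamma(P) \cap \Gamma_0$ corresponds to a subgroup $A \leq \mathbb{Z}_p^\times$ acting on $\mathbb{Z}_p^\times$ by multiplication.

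The goal is to conclude 2-transitivity, equivalently that $\Gamma_0$ is transitive on $\mathbb{Z}_p^\times$. The inclusion $A \leq \Gamma_0$ already shows the orbits of $\Gamma_0$ on $\mathbb{Z}_p^\times$ are unions of cosets of $A$. The main obstacle---and the technical core of Burnside's theorem---will be to upgrade this coset structure, using the many conjugate regular subgroups $gPg^{-1}$ (for $g \notin N_\Gamma(P)$), to the statement that every orbit of $\Gamma_0$ on $\mathbb{Z}_p^\times$ is closed under multiplication by all of $\mathbb{Z}_p^\times$, and hence equals $\mathbb{Z}_p^\times$. I would carry this out by a Schur--Wielandt style argument: each nontrivial orbital of $\Gamma$ on $\mathbb{Z}_p \times \mathbb{Z}_p$ is invariant under $P$, hence is the arc set of a Cayley digraph $\overrightarrow{\textnormal{Cay}}(\mathbb{Z}_p, B)$, and the existence of a second regular $\mathbb{Z}_p$-subgroup inside $\textnormal{Aut}(\overrightarrow{\textnormal{Cay}}(\mathbb{Z}_p, B))$ forces $B$ to be closed under multiplication by every element of $\mathbb{Z}_p^\times$, i.e.\ $B = \mathbb{Z}_p^\times$. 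Since this applies to each $\Gamma_0$-orbit $B_i$, we get $r=1$, so $\Gamma_0$ is transitive on $\mathbb{Z}_p^\times$ and $\Gamma$ is 2-transitive, completing the dichotomy.
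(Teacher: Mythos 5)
First, note that the paper does not prove this statement at all: it is quoted as an external result of Burnside, so there is no in-paper argument to measure yours against. Judged on its own, your setup is sound --- $p\mid\vert\Gamma\vert$ by the orbit--stabilizer lemma, the Sylow $p$-subgroup $P$ has order exactly $p$ since $p^2\nmid p!$, it acts regularly so you may take $P=\langle\sigma\rangle$ with $\sigma\colon i\mapsto i+1$ on $\mathbb{Z}_p$, the normalizer computation $g(i)=g(0)+ai$ is correct, and the reduction of double transitivity to transitivity of $\Gamma_0$ on $\mathbb{Z}_p^{\times}$ is standard. The case split (if $P$ is normal you are done; otherwise show 2-transitivity) is the right shape.

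The genuine gap is the final step, which is where the entire content of the theorem lives. You assert that if $\textnormal{Aut}(\overrightarrow{\textnormal{Cay}}(\mathbb{Z}_p,B))$ contains a second regular subgroup of order $p$, then $B$ is closed under multiplication by all of $\mathbb{Z}_p^{\times}$, hence $B=\mathbb{Z}_p^{\times}$. But this claim is essentially Burnside's theorem itself: the automorphism group of that orbital digraph is a transitive subgroup of $\textnormal{Sym}(p)$, and saying ``two Sylow $p$-subgroups imply $B=\mathbb{Z}_p^{\times}$'' is exactly the contrapositive of ``not doubly transitive implies a normal (hence unique) Sylow $p$-subgroup'' specialized to orbital digraphs --- and every non-2-transitive case arises this way. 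Invoking it without proof is circular. To close the gap you would have to actually execute one of the known arguments: Burnside's original character-theoretic proof (analyzing the permutation character restricted to $P$ and the $p$-th roots of unity), or Schur's classification of S-rings over $\mathbb{Z}_p$, typically carried out by passing to the group algebra $\mathbb{F}_p[\mathbb{Z}_p]\cong\mathbb{F}_p[x]/(x^p-1)=\mathbb{F}_p[y]/(y^p)$ and showing that the $\Gamma$-invariant submodule generated by the orbit sums forces each $\Gamma_0$-orbit on $\mathbb{Z}_p^{\times}$ to be the orbit of a subgroup of $\mathbb{Z}_p^{\times}$, whence $\Gamma\leq\textnormal{AGL}(1,p)$ and $P\triangleleft\Gamma$ in the non-2-transitive case. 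None of that work appears in the proposal; naming it ``a Schur--Wielandt style argument'' does not substitute for it.
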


Our first deep result is the following Lemma by Turner.

\begin{lemma}[\textnormal{Turner} \cite{turner}]
\label{turnerlemma}
	Let $p \in \mathbb{P}$. Then $\overrightarrow{\textnormal{Cay}}(\mathbb{Z}_p,S) \cong \overrightarrow{\textnormal{Cay}}(\mathbb{Z}_p, S')$ if and only if $S'=aS$ for some $a \in \mathbb{Z}_p^{\times}$. Furthermore, the statement is also well defined for Cayley graphs and Cayley tournaments.
\end{lemma}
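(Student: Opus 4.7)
The easy direction follows immediately from \autoref{autcay}: for any $a \in \mathbb{Z}_p^{\times}$ the map $x \mapsto ax$ is a group automorphism of $\mathbb{Z}_p$, so $S' = aS$ forces $\overrightarrow{\textnormal{Cay}}(\mathbb{Z}_p, S) \cong \overrightarrow{\textnormal{Cay}}(\mathbb{Z}_p, S')$. The same argument covers the Cayley graph and Cayley tournament cases, since a group automorphism of $\mathbb{Z}_p$ commutes with inversion. So I would concentrate on the nontrivial direction: given an isomorphism $\varphi$ between the two digraphs, I need to produce the scalar $a$.

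First I would normalise. By post-composing $\varphi$ with the translation $\rho'_{-\varphi(0)} \in \textnormal{Aut}(\overrightarrow{\textnormal{Cay}}(\mathbb{Z}_p, S'))$ coming from \autoref{regularsub}, I may assume $\varphi(0) = 0$. If I can then show that this normalised $\varphi$ is a group automorphism of $\mathbb{Z}_p$ (hence $\varphi(x) = ax$ for some $a \in \mathbb{Z}_p^\times$), the edge-preservation condition reads $v - u \in S \iff a(v-u) \in S'$, which is exactly $S' = aS$.

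The core of the proof identifies the translation subgroup as a characteristic piece of the automorphism group. Apply Burnside's \autoref{burnsidelem} to $\Gamma := \textnormal{Aut}(\overrightarrow{\textnormal{Cay}}(\mathbb{Z}_p, S))$, which is transitive by \autoref{cayleytransitive}. If $\Gamma$ is $2$-transitive, any two ordered pairs of distinct vertices are interchangeable by an automorphism, so adjacency is constant on such pairs; this forces $S \in \{\emptyset, \mathbb{Z}_p \setminus \{0\}\}$ and correspondingly for $S'$, so $a = 1$ works. Otherwise $\Gamma$ has a normal subgroup $P$ of order $p$. Since $|\Gamma|$ divides $p!$, the Sylow $p$-subgroup of $\Gamma$ has order $p$, so $P$ is Sylow and, being normal, is the unique such subgroup; the translation group $T := \{\rho_w : w \in \mathbb{Z}_p\} \leq \Gamma$ from \autoref{regularsub} also has order $p$, so $T = P$. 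The same analysis identifies the translations $T'$ as the unique Sylow $p$-subgroup of $\Gamma' := \textnormal{Aut}(\overrightarrow{\textnormal{Cay}}(\mathbb{Z}_p, S'))$.

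Conjugation $\psi \mapsto \varphi \psi \varphi^{-1}$ gives a group isomorphism $\Gamma \to \Gamma'$, which must send the unique Sylow $p$-subgroup $T$ to the unique Sylow $p$-subgroup $T'$. Hence for every $w$ there is a unique $f(w) \in \mathbb{Z}_p$ with $\varphi \rho_w \varphi^{-1} = \rho'_{f(w)}$, and $f$ is a group automorphism of $\mathbb{Z}_p$ since the conjugation map is a bijective homomorphism. Evaluating the defining identity at $0$ and using $\varphi(0) = 0$ yields $\varphi(w) = f(w)$, so $\varphi$ itself is a group automorphism of $\mathbb{Z}_p$, completing the argument. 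The main obstacle is the appeal to \autoref{burnsidelem}: pinning down the translations as the unique Sylow $p$-subgroup of the automorphism group is exactly what makes the whole strategy work, and once this characterisation is in place the rest is bookkeeping. The Cayley graph and Cayley tournament variants follow by the same argument, as the connection-set condition $S = S^{-1}$, respectively $S \cap S^{-1} = \emptyset$ and $S \cup S^{-1} = \mathbb{Z}_p \setminus \{0\}$, is preserved under the group automorphism $x \mapsto ax$.
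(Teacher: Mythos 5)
Your proposal is correct and follows essentially the same route as the paper: the easy direction via \autoref{autcay}, and for the converse a case split on double transitivity followed by an appeal to Burnside's \autoref{burnsidelem} to identify the translations as the unique normal Sylow $p$-subgroup, which conjugation by $\varphi$ must preserve, forcing $\varphi$ to be affine (linear, after your normalisation $\varphi(0)=0$). The paper phrases the conjugation step concretely as $(\varphi(0)\ \varphi(1)\ \dots\ \varphi(p-1)) = g'^a$ and absorbs the offset $\varphi(0)$ into a shift $k$ rather than normalising first, but the argument is the same.
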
 
\begin{proof}
	Let $S'=aS$ for some $a \in \mathbb{Z}_p^{\times}$. Since $\varphi_a: \mathbb{Z}_p \rightarrow \mathbb{Z}_p, v \mapsto av$ is a group automorphism, \autoref{autcay} yields $\overrightarrow{\textnormal{Cay}}(\mathbb{Z}_p,S) \cong \textnormal{Cay}(\mathbb{Z}_p,\varphi_a(S)) =  \overrightarrow{\textnormal{Cay}}(\mathbb{Z}_p, S')$ and by the same Lemma it is also well defined for Cayley graphs and Cayley tournaments.
	
	For the other direction it suffices to consider Cayley digraphs. Let $\overrightarrow{\textnormal{Cay}}(\mathbb{Z}_p,S)\\ \overset{\varphi}{\cong} \overrightarrow{\textnormal{Cay}}(\mathbb{Z}_p, S')$. 
	
	Assume $\textnormal{Aut}(\overrightarrow{\textnormal{Cay}}(\mathbb{Z}_p,S))$ or $\textnormal{Aut}(\overrightarrow{\textnormal{Cay}}(\mathbb{Z}_p,S'))$ are doubly-transitive. Then $\overrightarrow{\textnormal{Cay}}(\mathbb{Z}_p,S)$ and $\overrightarrow{\textnormal{Cay}}(\mathbb{Z}_p,S')$ are the same, either $K_p$ or $\overline{K}_p$. To see that each of both are either the one or the other, take without loss of generality $\overrightarrow{\textnormal{Cay}}(\mathbb{Z}_p,S)$ and assume it is not $\overline{K}_p$. Then we have $v \sim v'$ for some $v,v' \in \mathbb{Z}_p,\ v\not=v'$. For any $w,w' \in \mathbb{Z}_p,\ w \not = w'$, we find $g \in \textnormal{Aut}(\overrightarrow{\textnormal{Cay}}(\mathbb{Z}_p,S))$, such that $v^g = w$ and $v'^g = w'$. Hence, $w \sim w'$, which yields $K_p$. Since $K_p$ and $\overline{K}_p$ can not be isomorphic, the claim follows. Therefore, we get $S=S'=\emptyset$ or $S=S'=\lbrace 1,...,p-1 \rbrace$, respectively. Hence set $a := 1 \in (\mathbb{Z}_p)^{\times}$.
	
	 Now assume $\Gamma = \textnormal{Aut}(\overrightarrow{\textnormal{Cay}}(\mathbb{Z}_p,S))$ and $\Gamma' = \textnormal{Aut}(\overrightarrow{\textnormal{Cay}}(\mathbb{Z}_p,S'))$ are not doubly-transitive. We know that Calyey digraphs are vertex-transitive and therefore \autoref{burnsidelem} implies that there exist unique normal Sylow p-subgroups of order $p$ generated by $g = (x_0 \ x_1\	 ...\ x_{p-1})$ and $g'=(y_0 \ y_1\	 ...\ y_{p-1})$. By renaming the vertices we can without loss of generality assume $g = (0\ 1\ ...\ p-1) = g'$. Since 
	 \begin{align*}
	 	\varphi(i) \sim \varphi(j)\ &\textnormal{if and only if}\ i \sim j\\
	 	 &\textnormal{if and only if}\ i^g \sim j^g \\
	 	 &\textnormal{if and only if}\ i+1 \sim j+1\\
	 	 &\textnormal{if and only if}\ \varphi(i+1) \sim \varphi(j+1),
	 \end{align*} it follows that $(\varphi(0) \ \varphi(1)\	 ...\ \varphi(p-1)) \in \textnormal{Aut}(\overrightarrow{\textnormal{Cay}}(\mathbb{Z}_p,S'))$. Hence, 
	 \[(\varphi(0) \ \varphi(1)\	 ...\ \varphi(p-1)) = g'^a = (0 \ a\	 ...\ a(p-1)) \]
	  for some $a \in \mathbb{Z}_p^{\times}$, which we consider to be an element of $\lbrace 1,...,p-1 \rbrace \subset \mathbb{N}$ in $g'^a$. Define $k:=a^{-1}\varphi(0) \in \mathbb{Z}_p$, then $\varphi(0) = ak$, which yields $\varphi(i) = \varphi(0) + ai = ak + ai = a(k+i)\ \textnormal{for all}\ i$. It follows 
	\begin{align*}
		s \in S\ &\textnormal{if and only if}\ i \sim i+s\ \textnormal{for all}\ i\\
		&\textnormal{if and only if}\ \varphi(i) \sim \varphi(i+s)\ \textnormal{for all}\ i\\
		&\textnormal{if and only if}\ a(k+i) \sim  a(k+(i+s)) = a(k+i) + as\ \textnormal{for all}\ i\\
		&\textnormal{if and only if}\ as \in S'.
	\end{align*}
\end{proof}

\begin{theorem}
\label{circulant}
	An undirected graph $G$ with $|G|=p \in \mathbb{P}$ is vertex-transitive if and only if $G$ is isomorphic to a Cayley graph. Furthermore, a tournament $G$ with $|G|=p \in \mathbb{P}$ is vertex-transitive if and only if $G$ is isomorphic to a Cayley tournament.
\end{theorem}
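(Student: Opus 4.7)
The plan is to apply Sabidussi's criterion (\autoref{sabidussi}) by exhibiting a regular subgroup of $\textnormal{Aut}(G)$. The reverse direction follows at once from \autoref{cayleytransitive}, so I concentrate on the forward direction.

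Assume $G$ is vertex-transitive with $|V(G)|=p$ prime. Then $\textnormal{Aut}(G)$ is a transitive subgroup of $\textnormal{Sym}(p)$, and \autoref{burnsidelem} splits the analysis into two cases. If $\textnormal{Aut}(G)$ is doubly-transitive, then every ordered pair of distinct vertices lies in a single orbit of the induced action on pairs, so the adjacency relation is either empty or complete; thus $G$ is $K_p$ or $\overline{K_p}$, each of which is realized as $\textnormal{Cay}(\mathbb{Z}_p, S)$ with $S = \mathbb{Z}_p \setminus \lbrace 0 \rbrace$ or $S = \emptyset$, respectively. In the tournament case this possibility cannot arise for $p \geq 3$, since any $2$-transitive subgroup would contain an element swapping some pair $u, v$ and hence reversing the unique arc between them, contradicting \autoref{cayleytourn}; for $p = 2$ no tournament is vertex-transitive at all, so the statement is vacuous there.

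Otherwise, \autoref{burnsidelem} supplies a normal Sylow $p$-subgroup $P$ of $\textnormal{Aut}(G)$ of order $p$, and it remains to check that $P$ is regular. Because $P$ is a non-trivial subgroup of $\textnormal{Sym}(V(G))$, not every element of $P$ fixes every vertex, so some $P$-orbit has size exceeding one; since orbit sizes divide $|P|=p$, this orbit has size $p = |V(G)|$ and $P$ is transitive. \autoref{orbitstabilizer} then forces $|P_v|=1$ for every $v \in V(G)$, yielding regularity. An application of \autoref{sabidussi} now produces the desired isomorphism between $G$ and a Cayley graph in the undirected case, or a Cayley tournament in the tournament case.

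The main technical point should be the verification that $P$ acts transitively, which rests on combining the prime-order coincidence $|P| = |V(G)| = p$ with the faithfulness of the embedding $P \hookrightarrow \textnormal{Sym}(V(G))$. The doubly-transitive subcase for tournaments needs a brief separate check, but it is not delicate.
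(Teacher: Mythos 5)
Your proposal is correct and follows essentially the same route as the paper: reverse direction via \autoref{cayleytransitive}, then the Burnside dichotomy (\autoref{burnsidelem}) splitting into the doubly-transitive case ($K_p$ or $\overline{K}_p$, impossible for tournaments) and the Sylow-$p$ case, where regularity of the order-$p$ subgroup feeds into \autoref{sabidussi}. Your orbit-size argument for transitivity of $P$ is just a slightly more explicit justification of the paper's observation that the Sylow subgroup is generated by a single $p$-cycle and hence has trivial point stabilizers.
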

\begin{proof}
	If $G$ is a Cayley graph or a Cayley tournament, it is vertex-transitive because of \autoref{cayleytransitive}.
	
	On the other hand, let $G$ be any undirected, vertex-transitive graph of order $p$. If $\textnormal{Aut}(G)$ is doubly-transitive, we already have seen in the proof of \autoref{turnerlemma} that $G$ is either $K_p$ or $\overline{K}_p$. These are Cayley graphs with connection sets $S=\left \lbrace 1,...,p-1 \right \rbrace$ and $S=\emptyset$, respectively. The automorphism group of a tournament can not be doubly-transitive, since this would yield an undirected graph.
	
	If $\textnormal{Aut}(G)$ is not doubly-transitive, it must contain an unique, normal Sylow $p$-subgroup $\Gamma$ of order $p$ by \autoref{burnsidelem}. Note here, that we must require $\textnormal{Aut}(G)$ to act transitively on $G$, i.e. $G$ to be vertex-transitive. $\Gamma$ must be generated by a permutation of the form $g = (x_0 \ x_1\	 ...\ x_{p-1})$ with $\bigcup_i{x_i} = \lbrace 0, ... ,p-1 \rbrace  $. Without loss of generality. let $x_0$ be chosen such that there exists an edge from it. After renaming $x_i$ to $i$, we can assume $g = (0\ 1\ ...\ p-1)$. Thus, for any $v \in G$ the stabilizer of $v$ is the identity and hence, $\Gamma$ is a regular subgroup. Now by \autoref{sabidussi} the claim follows. Explicitly, let $v \in G$ be fixed, then $G$ is isomorphic to the Cayley graph $\textnormal{Cay}(\Gamma, S)$ with $S := \lbrace \rho_w \mid v \sim v^{\rho_w} = w \rbrace$ by $w \mapsto \rho_w$. If $G$ is a tournament, $\Gamma$ still is a regular subgroup and from \autoref{sabidussi} it also follows, that $G$ is isomorphic to a Cayley tournament.
	\end{proof}

At the end of our proof take for example $v := 0$. Then let $i \in \lbrace 1,...,p-1 \rbrace$, such that $v = 0 \sim i = v^{g_i}$, where $g_{i} = g^i$ in this case.  Now, if $k-l=i\ \textnormal{mod}\ p$, then $g_{k} g_{l}^{-1}=g^k g^{p-l} = g^{k-l}=g_{k-l} = g_{i} \in S$. Hence, $g_{k} \sim g_{l}$, which yields $k \sim l$. Hence $w \sim w+i\ \textnormal{mod}\ p$, for all $w \in \lbrace 0,...,p-1 \rbrace$. We obtained a so called \textit{Circulant} graph.
This already suggests the fact that Circulant graphs can be described in several equivalent ways. One characterization is, that the automorphism group of the graph includes a cyclic subgroup that acts transitively on the graphs vertices. Another characterization is, if some two vertices $v$ and $v+i$ are adjacent, then every two vertices $w$ an $w+i$ are adjacent. A third characterization would be, that the graph is a Cayley graph of a cyclic group.
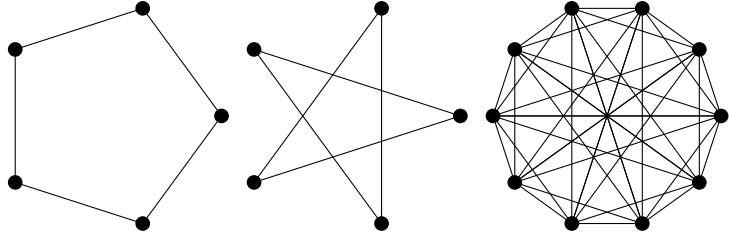
\begin{figure}[H]
\centering
\subfloat{
	\centering
	\begin{tikzpicture}[scale=1]
		\GraphInit[vstyle=Simple]
		\tikzset{VertexStyle/.append style = {color = black, minimum size = 5pt, inner sep = 0pt, outer sep = 0pt}}
		\tikzset{EdgeStyle/.append style = {style = thin}}

		\grCirculant[RA=1.5]{5}{1}
	\end{tikzpicture}
}
\subfloat{
	\centering
	\begin{tikzpicture}[scale=1]
		\GraphInit[vstyle=Simple]
		\tikzset{VertexStyle/.append style = {color = black, minimum size = 5pt, inner sep = 0pt, outer sep = 0pt}}
		\tikzset{EdgeStyle/.append style = {style = thin}}

		\grCirculant[RA=1.5]{5}{2}
	\end{tikzpicture}
}
\subfloat{
	\centering
	\begin{tikzpicture}[scale=1]
		\GraphInit[vstyle=Simple]
		\tikzset{VertexStyle/.append style = {color = black, minimum size = 5pt, inner sep = 0pt, outer sep = 0pt}}
		\tikzset{EdgeStyle/.append style = {style = thin}}

		\grCirculant[RA=1.5]{10}{1,2,4,5}
	\end{tikzpicture}
}
\caption{Three Circulant graphs}
\end{figure}

Be aware that $\textnormal{Cay}(\mathbb{Z}_n,S) \cong \textnormal{Cay}(\mathbb{Z}_n, S')$ if and only if $S'=aS$ for some $a \in \mathbb{Z}_n^{\times}$ does not hold for every $n \in \mathbb{N}$. Consider for example $n = 25$ with 
\[ S = \lbrace 1,4,5,6,9,11,14,16,19,20,21,24 \rbrace \] and
\[ S' = \lbrace 1,4,6,9,10,11,14,15,16,19,21,24 \rbrace. \]
Both corresponding Cayley graphs are wreath-products of 5-cycles, a term we will define in the last chapter, and therefore isomorphic. Though, there exists no $a \in \mathbb{Z}_{25}^{\times}$, such that $S' = aS$. For this, note that it must hold $a=a*	1\in S' \cap \mathbb{Z}_{25}^{\times}$; hence $a \in \lbrace 1,4,6,9,11,14,16,19,21,24 \rbrace$. It follows $a \not= 1$ and $a \not = 4$, because $a*5=4*5=20 \not \in S'$. In the same way all other possibilities yield a contradiction.\\\\

\chapter{Tournaments}
In this chapter we will give the main result of the thesis.
	\section{Vertex-transitive tournaments}
	Let us begin with a short remark. 
\begin{remark}
\label{oddorder}
Every vertex-transitive tournament has odd order.	
\end{remark}
\begin{proof}
	Let $G$ be a vertex-transitive tournament. Vertex-transitivity implies $\textnormal{deg}^+(v) = \textnormal{deg}^+(w) = k$ and $\textnormal{deg}^-(v) = \textnormal{deg}^-(w)=\ell$ for all $v,w \in V(G)$. Hence, we have 
	\[ |G|k = \sum_v \textnormal{deg}^+(v) = \sum_v \textnormal{deg}^-(v) = |G|\ell, \]
	which implies $k = \ell$. Since a tournament is complete, we also have $2k = k+\ell = |G|- 1$, which yields a contradiction, if $|G|$ is even.
\end{proof}
\begin{definition}
	Let $p \in \mathbb{P}\setminus \lbrace 2 \rbrace$ and $p-1 = 2^k r,\ 2 \nmid r$. By recursion is defined
	\begin{align*}
	&S(r) = 0,\\
	&S(m) = \sum_{\substack{m | n | r,\\m < n}}\textnormal{pr}_1(\varphi_p(n))\textnormal{pr}_2(\varphi_p(n)),\ 1 \leq m < r,\\
	&\varphi_p(m) = (k,\frac{p-1}{m}),\ \textnormal{such that}\ 2^{\frac{\frac{p-1}{m}}{2}} - S(m) = k \frac{p-1}{m},\ 1 \leq m \leq r.
	\end{align*}
\end{definition}
\begin{theorem}
	\label{tournamentmain}
	Let $p \in \mathbb{P} \setminus \lbrace 2 \rbrace$ and $p-1 = 2^k r,\ 2 \nmid r$. There exist 
	\[ \sum_{\substack{m | r,\\ 1 \leq m \leq r }} \textnormal{pr}_1(\varphi_p(m)) \]
	isomorphism classes of vertex-transitive tournaments of order $p$.
\end{theorem}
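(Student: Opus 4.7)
The plan is to reduce the enumeration to an orbit-counting problem on connection sets and then recognise the paper's recursive definition of $\varphi_p$ as an inclusion--exclusion across the divisor lattice of $r$. First, by \autoref{circulant} every vertex-transitive tournament of order $p$ is isomorphic to some $\overrightarrow{\textnormal{Cay}}_{\textnormal{tourn}}(\mathbb{Z}_p, S)$, and by \autoref{turnerlemma} two such tournaments are isomorphic if and only if $S' = aS$ for some $a \in \mathbb{Z}_p^\times$. So the count equals the number of orbits of $\mathbb{Z}_p^\times$ acting by multiplication on the set $\mathcal{T}$ of subsets $S \subseteq \mathbb{Z}_p \setminus \{0\}$ with $S \cap (-S) = \emptyset$ and $S \cup (-S) = \mathbb{Z}_p \setminus \{0\}$.

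Next I would rewrite the action in multiplicative coordinates. Fix a generator $\omega$ of the cyclic group $\mathbb{Z}_p^\times$ and identify $\mathbb{Z}_p \setminus \{0\}$ with $\mathbb{Z}_{p-1}$ via $\omega^i \leftrightarrow i$; multiplication by $\omega^j$ becomes translation by $j$, and in particular $-1$ corresponds to $(p-1)/2$. The tournament condition becomes: $T \subseteq \mathbb{Z}_{p-1}$ contains exactly one element of each pair $\{i, i + (p-1)/2\}$. For each divisor $d$ of $p-1$, let $H_d$ be the unique subgroup of $\mathbb{Z}_p^\times$ of order $d$; in the exponent picture it is $\tfrac{p-1}{d}\mathbb{Z}_{p-1}$. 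I will observe that $H_d$ can stabilise some $T \in \mathcal{T}$ only if $d$ is \emph{odd}, i.e.\ $d \mid r$: otherwise $(p-1)/2 \in H_d$, which would force $T = T + (p-1)/2$, contradicting the tournament condition. Conversely, if $d \mid r$, then a $T$ fixed by $H_d$ is a union of cosets of $H_d$, and the $(p-1)/d$ cosets pair up under the involution $+(p-1)/2$ into $(p-1)/(2d)$ disjoint pairs; from each pair one picks one coset independently, giving exactly
\[
F_d \;:=\; \bigl|\{T \in \mathcal{T} : H_d \text{ fixes } T\}\bigr| \;=\; 2^{(p-1)/(2d)}.
\]

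The final step is to extract exact stabilisers. Let $N_d$ be the number of $T \in \mathcal{T}$ whose stabiliser is exactly $H_d$. Since any stabiliser is some $H_d$ with $d \mid r$, the relation $F_m = \sum_{m \mid d,\, d \mid r} N_d$ holds, which rearranges into the recursion
\[
N_m \;=\; 2^{(p-1)/(2m)} \;-\; \sum_{\substack{m \mid d \mid r\\ m < d}} N_d.
\]
Every orbit with stabiliser $H_m$ has size $(p-1)/m$ by Orbit--Stabiliser (\autoref{orbitstabilizer}), so the number $K(m)$ of orbits with stabiliser $H_m$ satisfies $K(m)\cdot (p-1)/m = N_m$. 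Substituting yields precisely the relation $K(m)(p-1)/m = 2^{(p-1)/(2m)} - S(m)$ used to define $\textnormal{pr}_1(\varphi_p(m))$, so $K(m) = \textnormal{pr}_1(\varphi_p(m))$. The total number of orbits is then $\sum_{m \mid r} K(m)$, which is the formula in the statement.

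The main conceptual obstacle is the stabiliser analysis: one must clearly distinguish ``fixed by $H_d$'' from ``stabiliser equals $H_d$'', and justify that only odd-order subgroups can arise as stabilisers. Once this is in place, the inductive character of $S(m)$ and $\varphi_p(m)$ appears not as an ad-hoc recursion but simply as Möbius-style inclusion--exclusion on the divisor lattice of $r$, and the identification with the paper's formula is immediate.
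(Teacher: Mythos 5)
Your proposal is correct and follows essentially the same route as the paper's proof: reduce via \autoref{circulant} and \autoref{turnerlemma} to counting orbits of $\mathbb{Z}_p^{\times}$ acting by multiplication on tournament connection sets, observe that only odd-order subgroups $H_d$ (i.e.\ $d \mid r$) can fix such a set because $-1$ would otherwise lie in the stabiliser, count the sets fixed by $H_d$ as $2^{(p-1)/(2d)}$ via the pairing of cosets under negation, and recover $\textnormal{pr}_1(\varphi_p(m))$ by inclusion--exclusion over the divisor lattice of $r$ together with the orbit--stabiliser lemma. Your formulation in terms of exact stabilisers and M\"obius-style inversion is a cleaner packaging of the paper's discussion of ``maximal invariance groups'' and the quantity $S(m)$, but the underlying argument is the same.
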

\begin{proof}
	From \autoref{circulant} and the fact that every group of order $p$ is isomorphic to $\mathbb{Z}_p$, we know, that there exists, after identifying isomorphic graphs, an one-to-one correspondence between vertex-transitive tournaments on $p$ vertices and Cayley tournaments on $\mathbb{Z}_p$. 
	We can identify the connection set $S$ of any Cayley tournament with the $\frac{p-1}{2}$- tuple $(x_1,...,x_\frac{p-1}{2})$, where $x_i \in \lbrace i, -i \rbrace$, since $0 \not \in S$ and $i\in S$ or $-i \in S$, but only one holds, for all $i=1,...,\frac{p-1}{2}$. To indicate this identification we write $S \cong_{ident} (x_1,...,x_\frac{p-1}{2})$. It follows, that there exist $2^\frac{p-1}{2}$ possible connection sets $S$ or vertex-transitive tournaments up to isomorphism. From \autoref{turnerlemma} we know that 
	\[ 
	\overrightarrow{\textnormal{Cay}}_{tourn}(\mathbb{Z}_p,S) \cong \overrightarrow{\textnormal{Cay}}_{tourn}(\mathbb{Z}_p,S' )\ \textnormal{if and only if } S'=aS \textnormal{ for some } a \in \mathbb{Z}_p^{\times}.
	\]
	
We define an equivalence relation on the set of connection sets of Cayley tournaments on $\mathbb{Z}_p$ as follows 
\[ S \textnormal{ equivalent to } S'\ \textnormal{if and only if } S'=aS \textnormal{ for some } a \in \mathbb{Z}_p^{\times}. \]
 It follows, that any equivalence class has maximum size $\vline \mathbb{Z}_p^{\times} \vline = p-1$. Assume there is an equivalence class $E$, with $\vline E \vline < p-1$. Let $S \in E$, then for some $a,b \in \mathbb{Z}_p^{\times}$, we have $aS = bS$ or $S = a^{-1}bS$. How can we find all such $S$? \\
	Let $(x'_1,...,x'_\frac{p-1}{2})$ be a $\frac{p-1}{2}$-tuple, such that $x'_i \in \mathbb{Z}_p,\ (\bigcup_i x'_i) \cap (\bigcup_i x'_i)^{-1} = \emptyset$ and $x'_i \not \equiv x'_j$, if $i \not= j$. By reordering and using congruent values in $\lbrace -\frac{p-1}{2},...,\frac{p-1}{2} \rbrace$, we can identify this tuple with $(x_1,...,x_\frac{p-1}{2}),\ x_i \in \lbrace i, -i \rbrace$. We indicate this identification by writing $(x'_1,...,x'_\frac{p-1}{2}) \cong_{rewrite} (x_1,...,x_\frac{p-1}{2})$. Let $a \in \mathbb{Z}_p^{\times}$. Then we obtain \[
	aS \cong_{ident} a(x_1,...,x_\frac{p-1}{2}) = (ax_1,...,ax_\frac{p-1}{2}) \cong_{rewrite} (x'_1,...,x'_\frac{p-1}{2}).\] 
	 Hence, we can use the information on how $a$ acts on $\lbrace -\frac{p-1}{2},...,\frac{p-1}{2} \rbrace$ to understand the multiplication $aS$. We know that $\mathbb{Z}_p^{\times}$ is a disjunct union of left cosets of $\langle a \rangle$: 
	\[ \mathbb{Z}_p^{\times} =\ \stackrel{.} \bigcup_{b \in I\subseteq \mathbb{Z}_p^{\times}} b \langle a \rangle = \stackrel{.} \bigcup_{i=1,...,k} \lbrace y^i_1,...,y^i_{\ell} \rbrace, \] 
	where $I$ is some set of representatives and $\ell = | \langle a \rangle |,\ k=|\mathbb{Z}_p^{\times}/ \langle a \rangle|$ and $y^i_j \in \lbrace -\frac{p-1}{2},...,\frac{p-1}{2} \rbrace$. We will see in a moment, that if there exists some $a$-invariant $S$, $|\langle a \rangle|$ must be odd and hence, $k =|\mathbb{Z}_p^{\times}/ \langle a \rangle|$ must be even. Therefore, the left cosets come in pairs, such that they differ only in sign. We choose one of each, denoted by $\lbrace y'^j_1 ,...,y'^j_{\ell} \rbrace$, and obtain
	\[	a(x_1,...,x_\frac{p-1}{2}) \cong_{rewrite} (x_1,...,x_\frac{p-1}{2}) =: x \]
		if and only if
		\[ x \cong_{ident} \lbrace c_1\lbrace y'^1_1,...,y'^1_{\ell} \rbrace,...,c_{\ell}\lbrace y'^{\frac{k}{2}}_1,...,y'^{\frac{k}{2}}_{\ell} \rbrace \rbrace, \]
		with $c_i \in \lbrace -1,1 \rbrace,\ i=1,...,\ell$. Hence, there are $2^{ \frac{k}{2} }$ possibilities for an $a$-invariant $S$.
		
	 For a better understanding, let us give a short example for $p=11, a=5$. To obtain the following diagram, we start with the most inner two columns,  multiplicate them by $a=5$, reorder them and write them with the corresponding values in $\lbrace -5,...,5 \rbrace$:\\
	 \begin{center}
	 \begin{tabular}{l c c c r}
	 1 & 2 & -3 & -4 & -5\\
	 1 & 2 & 8 & 7 & 6\\
	 \hline
	 6 & 1 & 7 & 2 & 8\\
	 \hline
	 -1 & -2 & -3 & -4 &-5\\
	 1 & 2 & 3 & 4 & 5\\
	 \hline
	 5 & 10 & 4 & 9 & 3\\
	 \hline
	 10 & 9 & 3 & 4 & 5\\
	 -1 & -2 & 3 & 4 & 5\\	 
	 \end{tabular}
	\end{center}
	So the multiplication by $a$ uses the sign of the chosen value in $\lbrace -1, 1 \rbrace$ and produces the value in $\lbrace -5,5 \rbrace$ with the same sign. It takes the sign of the chosen value in $\lbrace -2, 2 \rbrace$ and produces the value in $\lbrace -1,1 \rbrace$ with the different sign, and so on. We therefore have
	 \[ (1 5), (2 -1), (3 4), (4 -2), (5 3)\]
	 or in a cycle way of writing also
	 \[ (1 5 3 4 -2), \]
	 since $(-2 1)$ contains the same information as $(2 -1)$. Note that we only need one half of the diagram. Now we can see which sets are invariant under $a$. These are $\lbrace 1, 5, 3, 4, -2 \rbrace \cong_{ident} (1\ -2\ 3\ 4\ 5)$ and $\lbrace -1, -5, -3, -4, 2 \rbrace \cong_{ident} (-1\ 2\ -3\ -4\ -5)$. We can get rid of the diagram, by looking at the left-cosets of $\langle a \rangle = \langle 5 \rangle = \lbrace 1, 5, 5^2, 5^3, 5^4, 5^5 \rbrace = \lbrace 1, 5, 3, 4, -2 \rbrace \subseteq \mathbb{Z}_p^{\times}$. In this case, there are only two: $\lbrace 1, 5, 3, 4, -2 \rbrace$ and $\lbrace -1, -5, -3, -4, 2 \rbrace$. We always get such related pairs, which both contain the same information. A more sophisticated example would be $p=13, a=3$. We then obtain $\frac{p-1}{|\langle a \rangle|} = \frac{12}{3} = 4$ left-cosets: $\lbrace 1, 3, -4 \rbrace, \lbrace -1, -3, 4 \rbrace, \lbrace 5, 2, 6 \rbrace, \lbrace -5 -2 -6 \rbrace$, and therefore have $2^{\frac{\frac{p-1}{|\langle a \rangle|}}{2}}=2^2=4$ possible $a$-invariant $S$, since we can construct such $S$ by choosing one set of each pair.
	 
	 Let us return to the proof. Which $a$'s do yield an $a$-invariant $S$? Assume $-1 \in \langle a \rangle$, then $1,-1 \in \langle a \rangle$. If $S$ is $a$-invariant, this would imply that the sign of the identified tuple of $S$ in the first place, is the opposite of its sign in the first place, which can not be true. Hence, there is no $a$-invariant S. Since $\langle a \rangle$ is a cyclic group generated by $a$, for any $d \in \mathbb{N}$ with $|\langle a \rangle| = kd$ for some $k \in \mathbb{N}$,  $\langle a^{k} \rangle$ is an unique subgroup of order $d$. Thus, $\lbrace -1, 1 \rbrace \subseteq \langle a \rangle \subseteq \mathbb{Z}_p^{\times}$ is the only subgroup of order $2$ and therefore, if there exists some $a$-invariant $S$, $| \langle a \rangle |$ is not dividible by $2$. 
	 
	 We've seen that we can write any $a$-invariant $S$ as an union of $ \frac{|\mathbb{Z}_p^{\times}/ \langle a \rangle|}{2}$ left cosets of $\langle a \rangle$. This yields the fact that all sets $S'$, which are equivalent to $S$, are $a$-invariant as well. For this, note that since $S' = bS$, for some $b \in \mathbb{Z}_p^{\times}$, $b$ just permutes the left cosets. \\
	 Unfortunately, not all $a$-invariant $S$ are necessarily equivalent.  Assume $b \langle a \rangle=c \langle a \rangle$, then 
	 \begin{align*}
	 	bd\langle a \rangle &= db \langle a \rangle \\
	 	&= dc\langle a \rangle \\
	 	&= cd\langle a \rangle.
	 \end{align*} 
	 Thus, $bS = cS$ and therefore any of the $2^{ \frac{|\mathbb{Z}_p^{\times}/ \langle a \rangle|}{2} }$ $a$-invariant $S$ is contained in an equivalence class of maximum size $|\mathbb{Z}_p^{\times}/ \langle a \rangle|$.
	 
	 Again, assume $\langle a \rangle \subseteq \langle b \rangle \subseteq \mathbb{Z}_p^{\times}$. Since 
	 \[ c \langle b \rangle =\ c\stackrel{.} \bigcup_{d \in I\subseteq \mathbb{Z}_p^{\times}} d \langle a \rangle  = \stackrel{.} \bigcup_{e \in J\subseteq \mathbb{Z}_p^{\times}} e \langle a \rangle, \] 
	 any left coset of $\langle b \rangle$ in $\mathbb{Z}_p^{\times}$ can be written as an union of left cosets of $\langle a \rangle$. The important part here is the following. If $S$ is $b$-invariant, it can be written as union of left cosets of $\langle b \rangle$, where it was chosen one left coset out of each pair differing only in sign. Assume such $S$ contains at least once both left cosets of a pair consisting of only in sign differing left cosets of $\langle a \rangle$. Then $S$ would also contain both elements of the pair of left coset of $\langle b \rangle$, which contain the left coset of $\langle a \rangle$. This clearly is a contradiction to the fact that $S$ is $b$-invariant. Therefore it follows, that if $S$ is $b$-invariant, it also is $a$-invariant. Hence, of the $2^{ \frac{|\mathbb{Z}_p^{\times}/ \langle a \rangle|}{2} }$ $a$-invariant $S$, $2^{ \frac{|\mathbb{Z}_p^{\times}/ \langle b \rangle|}{2} }$ are $b$-invariant, and therefore contained in equivalence classes of maximum size $|\mathbb{Z}_p^{\times}/ \langle b \rangle| <\  |\mathbb{Z}_p^{\times}/ \langle a \rangle|$. 
	 
	 Let $E$ be the equivalence class of a set $S$, which is $a$-invariant, such that $\langle a \rangle$ is the maximal group of all the groups generated by elements, which leave $S$ invariant. Assume that $E$ does not take its maximum size $|\mathbb{Z}_p^{\times}/ \langle a \rangle|$. Then $bS = cS$, for some $b,c \in \mathbb{Z}_p^{\times}$, i.e. $S$ is $d=b^{-1}c$-invariant. If $\langle d \rangle \subseteq \langle a \rangle$, the maximum size will not become smaller. Hence, assume the opposite $\langle a \rangle \subsetneq \langle d \rangle$. But $\langle a \rangle$ is maximal by assumption, so this is a contradiction.  This means that $E$ takes its maximum size $|\mathbb{Z}_p^{\times}/ \langle a \rangle|$. 
	 
	 To obtain the formula, let $1 \leq m \leq r,\ m| r$ and $n > m$, such that $n| r,\ m|n$. We find $a,b \in \mathbb{Z}_p^{\times}$, such that $|\langle a \rangle| = m,\ |\langle b \rangle|=n$ and $\langle a \rangle \subseteq \langle b \rangle$. As we've already seen, it follows that any $b$-invariant $S$ is also $a$-invariant. Hence, of the $2^{ \frac{|\mathbb{Z}_p^{\times}/ \langle a \rangle|}{2} }$ $a$-invariant $S$, $2^{ \frac{|\mathbb{Z}_p^{\times}/ \langle b \rangle|}{2} }$ are $b$-invariant. $S(m)$ computes the sum of sets corresponding to these $n$'s. The number of those sets, which are $a$-invariant, but not $b$-invariant, for any suitable $b$, is then computed by $2^{ \frac{|\mathbb{Z}_p^{\times}/ \langle a \rangle|}{2} } - S(m)$. Hence, the number of equivalence classes is given by
		 $\frac{2^{ \frac{|\mathbb{Z}_p^{\times}/ \langle a \rangle|}{2} } - S(m)}{|\mathbb{Z}_p^{\times}/ \langle a \rangle|}$ = $\textnormal{pr}_1(\varphi(m))$. The total number of equivalence classes is therefore given by the sum over $\textnormal{pr}_1(\varphi(m))$.
	 	 \end{proof}
	  \begin{example}
	  Consider $p=331 \in \mathbb{P}$. In terms of our definitions we would calculate this as follows. We have $p-1=330=2*11*3*5$ and $\varphi_p(\frac{p-1}{2}) = \varphi_p(11*3*5)=(1,2)$, since
\[	2^{\frac{\frac{p-1}{\frac{p-1}{2}}}{2}} - S(\frac{p-1}{2}) = k*(\frac{p-1}{\frac{p-1}{2}}) \Leftrightarrow 2 = k2 \Leftrightarrow k = 1.\]
		Therefore we get $S(11*3)=S(11*5)=S(3*5)=\textnormal{pr}_1(\varphi_p(11*3*5)) \textnormal{pr}_2(\varphi_p(11*3*5)) = 2$. Hence, we obtain $\varphi_p(11*3)=(3,10)$, since
		\[	2^{\frac{\frac{11*3*5*2}{11*3}}{2}} - S(11*3) = k*\frac{11*3*5*2}{11*3} \Leftrightarrow 32 - 2 = k5*2 \Leftrightarrow k = 3.\]
		In the same way we compute $\varphi_p(11*5)=(1,6)$ and $\varphi_p(3*5)=(93,22)$.
		Next, we might want to compute $\varphi_p(11)$. What we already have is $S(11)=1*2+3*10+1*6=38$. Therefore, we compute
		\[ 2^{\frac{\frac{11*3*5*2}{11}}{2}} - S(11) = k*\frac{11*3*5*2}{11} \Leftrightarrow 32768 - 38 = k30 \Leftrightarrow k=1091, \]
		and hence $\varphi_p(11)=(1091,30)$. In the same way one gets 
		\begin{align*}
			\varphi_p(3) &= (327534518354199,110),\\
			\varphi_p(5) &= (130150493,66).
		\end{align*} 
		The last computation is $\varphi_p(1)$. We get $S(1)=1*2 + 3*10 + 1*6 + 93*22 + 1091*30 + 327534518354199 * 110 + 130150493 * 66 = 36028805608929242$. Furthermore, 
		\begin{align*}
			& 2^{\frac{\frac{11*3*5*2}{1}}{2}} - S(1) = k*\frac{11*3*5*2}{1} \\
			& \Leftrightarrow
		46768052394588893382517914646921020600184232445990 = k 11*3*5*2 \\
		& \Leftrightarrow k = 141721370892693616310660347414912183636921916503.
		\end{align*} 
		Hence, \[ \varphi_p(1)=(141721370892693616310660347414912183636921916503, 330). \] 
		By summation we obtain the result of\[141721370892693616310660347414912511171570422384 \] isomorphism classes. Note that the number of maximum-sized equivalence classes is given by $\textnormal{pr}_1(\varphi_p(1))$.
		\end{example}

We can use the following Prolog-program for computations.
\lstinputlisting[language=Prolog]{program.pl}
\newpage
The program yields the following results.
\begin{figure*}[h]
	\begin{center}
\begin{tabular}{l | c || l | c}
	p & number of isomorphism classes & p & number of isomorphism classes \\
	\hline
	3 & 1 & 43 & 49940\\
	5 & 1 & 47 & 182362\\
	7 & 2 & 53 & 1290556\\
	11 & 4 & 59 & 9256396\\
	13 & 6 & 61 & 17895736\\
	17 & 16 & 67 & 130150588\\
	19 & 30 & 71 & 490853416\\
	23 & 94 & 73 & 954437292\\
	31 & 1096 & 79 & 7048151672\\
	37 & 7286 & 83 & 26817356776\\
	41 & 26216 & ... &\\
\end{tabular}
\end{center}
\end{figure*}

For $p=11$ the equivalence classes of the corresponding $2^{\frac{p-1}{2}}=32$ generating sets look as follows
\begin{align*}
	&\big \lbrace \lbrace 1, 2, 3, 4, 5 \rbrace, \lbrace 2, 4, 6, 8, 10 \rbrace, \lbrace 3, 6, 9, 1, 4 \rbrace, \lbrace 4, 8, 1, 5, 9 \rbrace, \lbrace 5, 10, 4, 9, 3 \rbrace, \lbrace 6, 1, 7, 2, 8 \rbrace,\\ &\lbrace 7, 3, 10, 6, 2 \rbrace, \lbrace 8, 5, 2, 10, 7 \rbrace, \lbrace 9, 7, 5, 3, 1 \rbrace, \lbrace 10, 9, 8, 7, 6 \rbrace \big \rbrace, \\
	&\big \lbrace \lbrace 1, 2, 3, 7, 5 \rbrace, \lbrace 2, 4, 6, 3, 10 \rbrace, \lbrace 3, 6, 9, 10, 4 \rbrace, \lbrace 4, 8, 1, 6, 9 \rbrace, \lbrace 5, 10, 4, 2, 3 \rbrace, \lbrace 6, 1, 7, 9, 8 \rbrace,\\ &\lbrace 7, 3, 10, 5, 2 \rbrace,\lbrace 8, 5, 2, 1, 7 \rbrace, \lbrace 9, 7, 5, 8, 1 \rbrace, \lbrace 10, 9, 8, 4, 6 \rbrace \big \rbrace,\\
	&\big \lbrace \lbrace 1,2,3,4,6 \rbrace, \lbrace 2,4,6,8,1\rbrace, \lbrace 3,6,9,1,7\rbrace, \lbrace 4,8,1,5,2 \rbrace, \lbrace 5,10,4,9,8 \rbrace, \lbrace 6,1,7,2,3\rbrace,\\ &\lbrace 7,3,10,6,9 \rbrace, \lbrace 8,5,2,10,4 \rbrace, \lbrace 9,7,5,3,10 \rbrace, \lbrace 10,9,8,7,5 \rbrace \big \rbrace,\\
	&\big \lbrace \lbrace 1,9,3,4,5 \rbrace, \lbrace 2, 7, 6, 8, 10 \rbrace \big \rbrace.
\end{align*}
The structure is reflected in $\varphi_{11}(5)=(1,2)$ and $\varphi_{11}(1)=(3,10)$.

\chapter{Conclusion}
\section{Generalisation}
How can we proceed from the previous results? The most urgent question is, whether we can generalize Theorem 4.1.3. to arbitrary $n \in \mathbb{N}$, or at least to $p^k$, with $p \in \mathbb{P}, k \in \mathbb{N}$. Unfortunately, difficulties arise even from the latter case. Right in the beginning of our proof we used the fact, that every vertex-transitive tournament of prime order $p$ can be uniquely identified with a Cayley tournament on $\mathbb{Z}_p$. This was based on \autoref{circulant}, which ultimately used Burnside's \autoref{burnsidelem}. But there is no general version of this statement for prime powers. 

To proceed, we will take a shortcut and focus only on Cayley tournaments, which means, we drop the relation to ordinary tournaments. That this is in general a restriction, we will see in depth by section two. Again, in the proof of our main Theorem 4.1.3. we used \autoref{turnerlemma}, which states 
\[ 
	\overrightarrow{\textnormal{Cay}}_{tourn}(\mathbb{Z}_p,S) \cong \overrightarrow{\textnormal{Cay}}_{tourn}(\mathbb{Z}_p,S' )\ \textnormal{if and only if } S'=aS \textnormal{ for some } a \in \mathbb{Z}_p^{\times}.
	\]
Generalizing this idea, a particular Cayley graph $\textnormal{Cay}(\Gamma,S)$ is called \textit{CI-graph}, if $\textnormal{Cay}(\Gamma,S) \cong \textnormal{Cay}(\Gamma, S')$ if and only if $S'=aS$ for some group automorphism $a \in \textnormal{Aut}(\Gamma)$.
	Furthermore, $\Gamma$ is called \textit{CI-group}, if for arbitrary $S,S', \textnormal{Cay}(\Gamma,S) \cong \textnormal{Cay}(\Gamma, S')$ if and only if $S'=aS$ for some group automorphism $a \in \textnormal{Aut}(\Gamma)$. Hence, $\Gamma$ is a CI-group if and only if every Cayley graph on $\Gamma$ is a CI-graph.
In the same way we define the $\textit{DCI-digraph}\ \overrightarrow{\textnormal{Cay}}(\Gamma,S)$ and the $\textit{DCI-group}$. 
In other words, the above statement of \autoref{turnerlemma} states that $\overrightarrow{\textnormal{Cay}}_{tourn}(\mathbb{Z}_p,S)$ is a DCI-digraph for reasonable $S$. Even though, from this only, we can not deduce that $\mathbb{Z}_p$ is a DCI-group, since there are sets $S$, which result in a Cayley digraph, but not in a Cayley tournament.

Though, the task of determining all cyclic CI-groups and DCI-groups has been completed by Muzychuk; his results go as follows.
\begin{lemma}[Muzychuk \cite{muzychuk2}]
	The cyclic group $\mathbb{Z}_n$ is a CI-group if and only if $n=8, 9, 18$ or $n=2^{e}m$, where m is odd and square-free and $e \in \lbrace 0,1,2\rbrace$.
\end{lemma}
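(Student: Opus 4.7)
The plan is to split the biconditional into its two directions and tackle them by quite different means. For the necessity direction I would construct explicit counterexamples: whenever $n$ is not of the listed form, I must exhibit a Cayley graph on $\mathbb{Z}_n$ that is a \emph{not} CI-graph, i.e.\ two connection sets $S, S'$ with $\textnormal{Cay}(\mathbb{Z}_n, S) \cong \textnormal{Cay}(\mathbb{Z}_n, S')$ but $S' \neq aS$ for any $a \in \mathbb{Z}_n^{\times}$. The example $n = 25$ at the end of the previous chapter is the prototype: two wreath products of $5$-cycles give isomorphic Cayley graphs whose connection sets are not related by any unit of $\mathbb{Z}_{25}$. This construction generalises whenever $n$ has an odd squared prime divisor; for the cases $n = 2^e m$ with $e \geq 3$ one has to combine the wreath-product trick with extra flexibility provided by the large Sylow $2$-subgroup to produce similar counterexamples. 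The three sporadic exceptions $n \in \lbrace 8, 9, 18 \rbrace$ are then precisely the values that escape all such constructions, which must be verified by direct inspection.

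For the sufficiency direction I would pass to the language of Schur rings. A Cayley (di)graph $\textnormal{Cay}(\Gamma, S)$ gives rise to a Schur ring over $\Gamma$, obtained from the orbits on $\Gamma$ of any permutation group containing the regular representation of $\Gamma$ together with the stabiliser of the identity. Two Cayley graphs on $\Gamma$ turn out to be isomorphic exactly when the two associated Schur rings are related by a suitable \emph{Cayley isomorphism}, and $\textnormal{Cay}(\Gamma, S)$ is a CI-graph precisely when every such Cayley isomorphism is realised by an element of $\textnormal{Aut}(\Gamma)$. The strategy is then to show that, for $n$ of the required form, every Schur ring over $\mathbb{Z}_n$ is rational in a sense that forces any Cayley isomorphism to come from $\textnormal{Aut}(\mathbb{Z}_n) = \mathbb{Z}_n^{\times}$.

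The main obstacle, by a large margin, will be establishing the full classification of Schur rings over cyclic groups that underlies the sufficiency direction; this is exactly the technical heart of Muzychuk's work. It relies on machinery (coherent configurations, $S$-rings, the $U/L$-decomposition of Schur rings over cyclic groups) that has not been developed anywhere in this thesis, so a fully self-contained proof at the bachelor level is out of reach. My proposal is therefore to give a clean and complete treatment of the necessity direction via wreath-product counterexamples, state the Schur-ring classification precisely, cite \cite{muzychuk2} for its proof, and show in detail how the classification implies the CI-property for the listed values of $n$. The small sporadic cases $n = 8, 9, 18$ can be double-checked by enumerating their Cayley graphs outright.
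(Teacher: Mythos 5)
The paper offers no proof of this statement: it is quoted verbatim from Muzychuk and used as a black box, so there is no internal argument to compare yours against. Your proposal is an honest roadmap rather than a proof, and its own conclusion -- that the sufficiency direction rests on Muzychuk's classification of Schur rings over cyclic groups and must ultimately be cited -- lands you in the same place as the thesis. That is a defensible choice for a bachelor thesis, but it should be recognised for what it is: the technical heart of both directions is being outsourced to \cite{muzychuk2}, not proved.

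Two points in your sketch are genuinely off and would need repair if you tried to carry out even the parts you claim are doable. First, the assertion that the wreath-product counterexample ``generalises whenever $n$ has an odd squared prime divisor'' is false for undirected Cayley graphs: $n=9$ and $n=18$ have the odd square factor $3^2$ and yet appear on the CI list. This is precisely why $8,9,18$ occur as exceptions for CI but not for DCI -- the isomorphic-but-inequivalent pairs at those orders are necessarily directed. Your framework of ``Cayley (di)graphs'' blurs this distinction, and the necessity argument you describe would wrongly eliminate $9$ and $18$. Second, the claim that sufficiency follows because ``every Schur ring over $\mathbb{Z}_n$ is rational'' cannot be right: rationality of the Schur ring generated by $S$ would force $aS=S$ for every unit $a$, which fails for almost all connection sets (e.g.\ every Cayley tournament on $\mathbb{Z}_p$ has $(-1)S\neq S$). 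What Muzychuk actually establishes is a structural decomposition of Schur rings over cyclic groups together with a solving set of isomorphisms, which is considerably more delicate than rationality. Finally, ``direct inspection'' of all Cayley graphs on $\mathbb{Z}_{18}$ is a nontrivial computation that you should not present as a footnote.
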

\begin{lemma}[Muzychuk \cite{muzychuk2}]
	The cyclic group $\mathbb{Z}_n$ is a DCI-group if and only if $n=2^{e}m$, where m is odd and square-free and $e \in \lbrace 0,1,2\rbrace$.
	\end{lemma}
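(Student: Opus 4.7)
My plan would be to attack the two directions with substantially different tools. For the easier \emph{only if} direction, I would construct explicit counterexamples whenever $n$ fails the given arithmetic condition -- that is, whenever $n$ has an odd prime squared factor or the power of $2$ dividing $n$ is at least $3$. For each forbidden $n$, the goal is to exhibit two connection sets $S, S' \subseteq \mathbb{Z}_n \setminus \lbrace 0 \rbrace$ such that $\overrightarrow{\textnormal{Cay}}(\mathbb{Z}_n, S) \cong \overrightarrow{\textnormal{Cay}}(\mathbb{Z}_n, S')$, yet $S' \neq aS$ for every $a \in \textnormal{Aut}(\mathbb{Z}_n) \cong \mathbb{Z}_n^{\times}$. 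A natural template is to lift a Cayley structure from the quotient $\mathbb{Z}_{n/p}$ back to $\mathbb{Z}_n$ by replacing each selected quotient element with a full fibre of size $p$; reshuffling fibres yields non-affine isomorphisms, very much in the spirit of the wreath-product example for $n=25$ discussed after \autoref{turnerlemma}. Analogous constructions, placed on the $2$-part of $n$, handle the $e \geq 3$ case.

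For the \emph{if} direction, I would appeal to Babai's reformulation of the CI-property: a Cayley digraph $\overrightarrow{\textnormal{Cay}}(\Gamma, S)$ is a DCI-digraph if and only if any two regular subgroups of $\textnormal{Aut}(\overrightarrow{\textnormal{Cay}}(\Gamma, S))$ isomorphic to $\Gamma$ are conjugate inside that automorphism group. The task therefore becomes: whenever $n = 2^e m$ with $e \leq 2$ and $m$ odd and square-free, the embedding of $\mathbb{Z}_n$ as a regular cyclic subgroup (cf.\ \autoref{regularsub}) inside the full automorphism group of any Cayley digraph on $\mathbb{Z}_n$ is unique up to conjugacy. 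I would attack this through the theory of Schur rings over $\mathbb{Z}_n$: every connection set generates an S-ring, and conjugacy of regular cyclic subgroups corresponds to an $\mathbb{Z}_n^{\times}$-equivalence of S-rings. The structural classification of S-rings over cyclic groups, developed by Leung, P\"oschel, Muzychuk and others, shows that under the arithmetic hypothesis on $n$ every such S-ring decomposes by tensor and wreath products into primitive pieces carried by prime-order subgroups, and on each such piece the DCI-property reduces to a Turner-style argument akin to \autoref{turnerlemma}.

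The hard part, by a wide margin, is the \emph{if} direction. The square-freeness of $m$ is exactly what rules out the wreath-type S-rings that produced the counterexamples in the other direction, and verifying this demands a delicate case analysis over the divisor lattice of $n$ together with an understanding of how $\mathbb{Z}_n^{\times}$ permutes the possible S-ring decompositions. Without the full S-ring machinery, I see no way to bypass a lengthy case analysis, prime by prime, over the divisors of $n$; this is precisely the point where Muzychuk's structural theorems do the real work, and any fully self-contained proof would essentially have to re-prove them.
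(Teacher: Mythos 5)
The paper does not prove this lemma at all: it is imported verbatim from Muzychuk's work as a black-box citation, so there is no internal proof to compare your attempt against. Your outline is a fair description of how the result is actually established in the literature --- counterexamples of wreath type for the \emph{only if} direction, and the Schur-ring classification over cyclic groups plus Babai's conjugacy criterion for the \emph{if} direction --- but as it stands it is a roadmap rather than a proof. The entire \emph{if} direction rests on the structural classification of S-rings over $\mathbb{Z}_n$, which you explicitly defer to Muzychuk, Leung and P\"oschel; that classification \emph{is} the theorem, so nothing has been proved. This is not a criticism of your understanding so much as an observation that the statement is a deep result whose proof cannot realistically be reconstructed from the tools available in this thesis.

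Two concrete points where the sketch would need real work even as an outline. First, in the \emph{only if} direction the case $8 \mid n$ is more delicate than you suggest: $\mathbb{Z}_8$ is a CI-group but not a DCI-group (compare the two Muzychuk lemmas quoted in the paper), so the counterexample for $e \geq 3$ must be a genuine digraph whose connection set is not inverse-closed; a naive transplant of the undirected $n=25$ fibre-reshuffling construction onto the $2$-part will not produce one, and you would need to exhibit an asymmetric connection set explicitly. Second, your appeal to Babai's criterion is stated correctly, but the passage from ``conjugacy of regular cyclic subgroups'' to ``$\mathbb{Z}_n^{\times}$-equivalence of S-rings'' hides the main difficulty: one must show that for square-free odd part and $e \leq 2$ every transitive group containing a regular cyclic subgroup has a normal structure forcing all such subgroups to be conjugate, and this is precisely where the prime-by-prime analysis you mention becomes unavoidable and lengthy. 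In short: the architecture is right, the content is absent.
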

Following our main proof, we could try to deduce the number of isomorphy classes of Cayley tournaments on $\mathbb{Z}_n$ with $n=2^e \cdot p_1\cdot...\cdot p_k$, where $p_i \in \mathbb{P}\setminus \lbrace 2 \rbrace, p_i \not= p_j$, if $i\not=j$ and $e = 0$ (every vertex-transitive tournament has odd order). Still, by going on with the construction of our proof, we find that we used the fact that $\mathbb{Z}_p^{\times}$ is cyclic to obtain exactly one subgroup for any given suitable order.
Relating to this problem, one finds the following fact.
\begin{remark}
	$\mathbb{Z}_n^{\times}$ is cyclic if and only if $n = 2,4,p^k,2p^k$, where $p \in \mathbb{P}\setminus \lbrace 2\rbrace, k \in \mathbb{N}$. 
\end{remark} 
Hence, we are stuck, since our chosen $n$ either results in $\mathbb{Z}_n$ being a DCI-group, or in $\mathbb{Z}_n^{\times}$ being cyclic, and both holds only in the trivial case of $n=p \in \mathbb{P}\setminus \lbrace 2\rbrace$.

\section{Non-Cayley vertex-transitive tournaments}
As said before, handling Cayley tournaments, instead of vertex-transitive tournaments is actually a restriction. Precisely, there exists a vertex-transitive tournament, that is not a Cayley tournament. Even more, one can construct infinite families of such graphs. How is this done? An import ingredient are metacirculant digraphs, which were defined by Alspach and Parsons \cite{alspach1982construction} as follows. Let $m \in \mathbb{N}_{\geq 1}, n \in \mathbb{N}_{\geq 2}, a \in \mathbb{Z}_n^{\times}$ and $S_0,...,S_{m-1} \subseteq \mathbb{Z}_n$, such that
\begin{enumerate} 
\item $0 \not \in S_0$
\item $a^mS_r = S_r$ for $0 \leq r \leq m-1$.	
\end{enumerate}
The \textit{metacirculant digraph} $G=G(m,n,a,S_0,...,S_{m-1})$ is the digraph with vertex set
\[\lbrace v_j^i \mid i \in \mathbb{Z}_m, j \in \mathbb{Z}_n \rbrace \]
and edge set 
\[ \lbrace \lbrack v_j^i, v_h^{i+r} \rbrack \mid r \in \mathbb{Z}_m, h \in j+a^i S_r \rbrace. \]
We define permutations $\rho, \sigma$ on $V(G)$ by $(v_j^i)^{\rho} = v_{j+1}^i, (v_j^i)^{\sigma} = v_{aj}^{i+1}$ and obtain the following characterization.

\begin{lemma}[Alspach and Parson \cite{alspach1982construction}]
\label{alspach}
	The metacirculant $G=G(m,n,a,S_0,\\...,S_{m-1})$ is vertex-transitive with $\langle \rho, \sigma \rangle \leq \textnormal{Aut}(G)$. Conversely, any digraph $G'$ with vertex set $V(G)$ and $\langle \rho, \sigma \rangle \leq \textnormal{Aut}(G)$ is an $(m,n)$-metacirculant.
\end{lemma}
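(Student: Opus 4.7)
The plan is to prove both directions in turn, using the definition of the metacirculant's edge set and direct computation with the permutations $\rho$ and $\sigma$.

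For the forward direction, I would first verify that $\rho$ and $\sigma$ lie in $\textnormal{Aut}(G)$. Since $\rho$ acts as $v_j^i \mapsto v_{j+1}^i$, it fixes the upper index and shifts the lower one; because the edge condition $h \in j + a^i S_r$ depends only on the difference $h-j$ and on $i,r$, we have $[v_j^i, v_h^{i+r}]\in E(G)$ iff $[v_{j+1}^i, v_{h+1}^{i+r}]\in E(G)$, so $\rho$ is an automorphism. For $\sigma$, the edge $[v_{aj}^{i+1}, v_{ah}^{(i+1)+r}]$ requires $ah \in aj + a^{i+1}S_r$, i.e.\ $h-j \in a^i S_r$, which is exactly the original condition. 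To obtain vertex-transitivity I would note that $(v_0^0)^{\sigma^i} = v_0^i$ (since $a\cdot 0 = 0$) and then $(v_0^i)^{\rho^j} = v_j^i$, so $\langle \rho,\sigma\rangle$ acts transitively on $V(G)$.

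For the converse, start with any digraph $G'$ on the vertex set $V(G)$ whose automorphism group contains $\langle \rho,\sigma\rangle$. Define, for each $r \in \mathbb{Z}_m$,
\[
S_r := \{\,j \in \mathbb{Z}_n \mid [v_0^0, v_j^r] \in E(G')\,\}.
\]
I would check condition (1) immediately ($0 \notin S_0$ because digraphs have no loops) and condition (2) by observing that $\sigma^m$ sends $v_0^0 \mapsto v_0^m = v_0^0$ and $v_j^r \mapsto v_{a^m j}^{r+m} = v_{a^m j}^{r}$, so that $j \in S_r$ iff $a^m j \in S_r$, i.e.\ $a^m S_r = S_r$.

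It remains to show that the edge set of $G'$ coincides with that of $G(m,n,a,S_0,\dots,S_{m-1})$. The key trick is to map an arbitrary edge to a "standard" one based at $v_0^0$: apply $\sigma^{-i}$ followed by $\rho^{-a^{-i}j}$ to the pair $(v_j^i, v_h^{i+r})$. The first step yields $(v_{a^{-i}j}^0, v_{a^{-i}h}^r)$, and the second yields $(v_0^0, v_{a^{-i}(h-j)}^r)$. Since both operations are automorphisms of $G'$, we get
\[
[v_j^i,v_h^{i+r}] \in E(G') \iff a^{-i}(h-j) \in S_r \iff h \in j + a^i S_r,
\]
which is precisely the edge condition defining $G(m,n,a,S_0,\dots,S_{m-1})$. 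Hence $G' = G(m,n,a,S_0,\dots,S_{m-1})$.

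The main obstacle is bookkeeping: one must be careful that the upper index lives in $\mathbb{Z}_m$ and the lower in $\mathbb{Z}_n$, that $\rho$ and $\sigma$ do not commute, and that the reduction to a standard edge at $v_0^0$ uses the correct exponents of $a$. Once these index computations are set up cleanly, both directions follow from the definition of the edge set combined with the fact that $\langle \rho,\sigma\rangle$ acts transitively on the pairs $(i,j)$ via the two-step composition $\sigma^i \rho^j$ (suitably ordered).
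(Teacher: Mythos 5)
Your proof is correct. The paper states this lemma as a cited result of Alspach and Parsons and gives no proof of its own, so there is nothing to compare against; your argument (checking that $\rho$ and $\sigma$ preserve the edge condition $h-j\in a^iS_r$, getting transitivity from $\sigma^i\rho^j$, and in the converse reading off $S_r$ from the out-neighbours of $v_0^0$ and transporting an arbitrary pair back to a standard one via $\sigma^{-i}$ followed by $\rho^{-a^{-i}j}$) is the standard one and all the index computations check out, with condition (2), which you verify via $\sigma^m$, being exactly what makes $a^iS_r$ independent of the choice of representative of $i\in\mathbb{Z}_m$.
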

Maru{\v{s}}i{\v{c}} \cite{marusic} has proven that every vertex-transitive digraph of order $p^k$, with $k \leq 3$ is necessarily a Cayley digraph. On the other hand, he also constructed for each $k \geq 4$ a non-Cayley vertex-transitive digraph of order $p^k$:\begin{lemma}[Maru{\v{s}}i{\v{c}} \cite{marusic}]
\label{marusicmeta}
	Let $p\in \mathbb{P}, k \geq 3, a \in \mathbb{Z}_{p^k}^{\times}$ of order $p^2$. Let $S_0 = \langle a^p\rangle$ and $S_1 = \langle 0 \rangle$. Let $S_i = \emptyset, 2 \leq i \leq p-1, p \not=2$. Then the metacirculant digraph $G(p^k,p,a)=G(p^k,p,S_0,...,S_{p-1})$ is not a Cayley digraph.
\end{lemma}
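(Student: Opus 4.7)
The plan is to apply Sabidussi's recognition criterion (\autoref{sabidussi}): the digraph $G$ is isomorphic to a Cayley digraph if and only if $\textnormal{Aut}(G)$ contains a regular subgroup. Since $|V(G)|=p\cdot p^k=p^{k+1}$, any such subgroup is a $p$-group of order $p^{k+1}$. I would therefore assume for contradiction that a regular $H\le \textnormal{Aut}(G)$ of this order exists, and derive a contradiction from the very rigid form of the connection sets.

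First I would identify a canonical block system for $H$. Because $S_1=\{0\}$ and $S_r=\emptyset$ for $2\le r\le p-1$, the only inter-fibre edges are $v_j^i\to v_j^{i+1}$, so each vertex lies on a unique directed cycle of length $p$ visiting every fibre $F_i:=\{v_j^i\mid j\in\mathbb{Z}_{p^k}\}$ exactly once. Consequently every automorphism of $G$ setwise permutes the fibres, and $\{F_0,\dots,F_{p-1}\}$ is a block system for any vertex-transitive subgroup, in particular for $H$. Letting $K$ be the kernel of the induced action of $H$ on the fibres, regularity of $H$ forces $|H/K|=p$ and $|K|=p^k$, and $K$ acts regularly on each $F_i$.

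Next I would bring in the horizontal edges. Restricted to $F_i$ they form the Cayley digraph $\overrightarrow{\textnormal{Cay}}(\mathbb{Z}_{p^k},a^i\langle a^p\rangle)$ on the additive group $\mathbb{Z}_{p^k}$, and $K|_{F_i}$ is a regular $p$-subgroup of its automorphism group. Under the vertical identification $F_i\simeq F_{i+1}$ given by $v_j^i\leftrightarrow v_j^{i+1}$ the connection sets of consecutive fibres differ by multiplication by $a\in\mathbb{Z}_{p^k}^{\times}$, and conjugation in $H$ by any element of $H\setminus K$ transports $K|_{F_i}$ to $K|_{F_{i+1}}$ compatibly with this multiplicative twist. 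From the proof of \autoref{regularsub} the \emph{standard} regular subgroup inside the automorphism group of such a Cayley digraph is the left-regular representation of $\mathbb{Z}_{p^k}$, so $K|_{F_i}$ can be compared with this translation subgroup by a further, short, structural analysis.

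The main obstacle, and the technical core of the argument, is to turn the above compatibility into a contradiction with regularity. The idea is that iterating the twist $p$ times returns us to $F_0$ but composes to multiplication by $a^p$, which is a non-identity element of order $p$ in $\mathbb{Z}_{p^k}^{\times}$; a regular subgroup $K$ stable under this composed twist should have to contain an element acting on $F_0$ as multiplication by $a^p$, and every such scalar fixes $0\in\mathbb{Z}_{p^k}$, hence the vertex $v_0^0$, contradicting the regularity of $H$. Carrying out the book-keeping between the additive structure of $\mathbb{Z}_{p^k}$ and the multiplicative twist by $a$ (using crucially that $a$ has order $p^2$, not merely $p$, and that $k\ge 3$ leaves room for $\langle a\rangle$ to be a proper subgroup) is the delicate step; once completed, Sabidussi's criterion concludes that $G$ is not a Cayley digraph.
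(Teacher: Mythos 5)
The paper itself offers no proof of this lemma --- it is imported from Maru\v{s}i\v{c} \cite{marusic} as a black box --- so your attempt has to stand on its own, and it does not: there is a genuine gap at precisely the point you defer as ``the technical core''. Before that, a smaller issue: the unique directed $p$-cycles you exhibit are the \emph{columns} $\lbrace v_j^0,\dots,v_j^{p-1}\rbrace$, not the fibres, so even granting their uniqueness (which itself requires excluding directed $p$-cycles using intra-fibre edges, whose connection sets $a^i\langle a^p\rangle$ have $p$ elements each) you must still pass from ``automorphisms permute the columns'' to ``automorphisms permute the rows'', e.g.\ by identifying the fibres as the connected components of the subdigraph of edges lying on none of these cycles. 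This is repairable, but it is not automatic as written.

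The decisive problem is the final contradiction. An element of $H\setminus K$ shifting the fibres cyclically need not be the standard $\sigma\colon v_j^i\mapsto v_{aj}^{i+1}$; after composing with translations it has the shape $v_j^i\mapsto v_{a(j+c)}^{i+1}$, and its $p$-th power acts on $F_0$ as the affine map $j\mapsto a^pj+d$ with $d=ca\sum_{t=0}^{p-1}a^t$. For odd $p$ the element $a^p$ has the form $1+up^{k-1}$ with $p\nmid u$, so $(a^p-1)\mathbb{Z}_{p^k}=p^{k-1}\mathbb{Z}_{p^k}$, whereas $\sum_{t=0}^{p-1}a^t$ is $p$ times a unit; hence this affine map is \emph{fixed-point free} whenever $p^{k-2}\nmid c$. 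So the element ``acting on $F_0$ as multiplication by $a^p$'', whose fixed point at $0$ is supposed to contradict regularity, need not exist in an arbitrary candidate $H$, and the contradiction you describe evaporates for the obvious candidates. Excluding every subgroup of order $p^{k+1}$ requires an actual structural analysis of the Sylow $p$-subgroup of $\textnormal{Aut}(G)$ containing $\langle\rho,\sigma\rangle$ (into which any regular subgroup can be conjugated, being a $p$-group); that analysis is the content of Maru\v{s}i\v{c}'s proof and is exactly what your proposal leaves out.
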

Based on these results, Mansilla \cite{mansilla2004infinite} then gave a construction on an infinite family of non-Cayley vertex-transitive tournaments of order $p^k, p\in \mathbb{P}\setminus \lbrace 2 \rbrace, k \geq 4$. Therefore, let $a \in \mathbb{Z}_{p^k}^{\times}$ be of order $p^2$ and $G=G(p^k,p,a)$ the corresponding metacirculant digraph in \autoref{marusicmeta}. Let $\Gamma$ be the Sylow $p$-subgroup of $\textnormal{Aut}(G)$ containing $\rho$ and $\sigma$. The main idea now is to work out a tournament $T=T(p^k,p,a)$ with $V(T)=V(G)$ and $\textnormal{Aut}(T) = \Gamma$. In the original proof of \autoref{marusicmeta} one sees that $\Gamma$ does not have any regular subgroup, and hence $T(p^k,p,a)$ will not be Cayley. Though, since $\langle \rho, \sigma \rangle \leq \Gamma =\textnormal{Aut}(T)$, by \autoref{alspach} it follows that $T$ will be a $(p^k,p)$-metacirculant digraph, wherefore it is vertex-transitive.  

How can we from here obtain infinite many non-Cayley vertex-transitive tournaments? Let $q \in \mathbb{P}, q \equiv 3\ \textnormal{mod}\ 4$. The \textit{Paley tournament} $P(q)$ has vertex set $\mathbb{F}_{p^n}$ and $r \sim s$ if and only if $s-r$ is a non-zero square. The lexicographic product of two digraphs $G_1, G_2$ has vertex-set $V(G_1) \times V(G_2)$ and $(v,w) \sim (v',w')$ if and only if $v \sim v'$ or $v=v'$ and $w \sim w'$. Mansilla proves the following result by defining new non-Cayley vertex-transitive tournaments from old ones, using lexicographic product with Paley tournaments. 
\begin{lemma}
	Let $m=p_1^{\ell_1}\cdot ... \cdot p_r^{\ell_r},\ 2 < p_1 < ... < p_r,\ \ell_1,...,\ell_r \in \mathbb{N}_{\geq 1}$. If there exists an index $i, 1 \leq i \leq r$, such that $l_i \geq 4$ and $p_j \equiv 3\ \textnormal{mod}\ 4$ for every $1 \leq j \leq r, i\not=j$, then there is a non-Cayley vertex-transitive tournament of order $m$. 
\end{lemma}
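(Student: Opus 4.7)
The plan is to build the required tournament as an iterated lexicographic product, combining the two ingredients already in place: the Marušič-based construction of a non-Cayley vertex-transitive tournament of prime-power order, and the Paley tournament. Fix the distinguished index $i$ with $\ell_i \geq 4$ and let $a \in \mathbb{Z}_{p_i^{\ell_i}}^{\times}$ have order $p_i^2$. Following the construction of Mansilla described after \autoref{marusicmeta}, let $T_0 := T(p_i^{\ell_i}, p_i, a)$ be the resulting non-Cayley vertex-transitive tournament of order $p_i^{\ell_i}$. For every $j \neq i$, the hypothesis $p_j \equiv 3 \bmod 4$ guarantees that the Paley tournament $P(p_j)$ of order $p_j$ is defined. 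I then set
\[
T := T_0 \circ \underbrace{\bigl(P(p_1) \circ \cdots \circ P(p_1)\bigr)}_{\ell_1 \text{ copies}} \circ \cdots \circ \underbrace{\bigl(P(p_r) \circ \cdots \circ P(p_r)\bigr)}_{\ell_r \text{ copies}},
\]
with the factor corresponding to $j=i$ omitted, so that $|T| = p_i^{\ell_i} \cdot \prod_{j \neq i} p_j^{\ell_j} = m$.

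Next I would verify the two easy properties. For the tournament property, it suffices by induction on the number of lex factors to observe that if $G_1, G_2$ are tournaments, then for distinct $(v,w),(v',w') \in V(G_1) \circ V(G_2)$ either $v \neq v'$ and exactly one of $v \sim v'$ or $v' \sim v$ holds in $G_1$, or $v = v'$ and exactly one of $w \sim w'$ or $w' \sim w$ holds in $G_2$. Vertex-transitivity propagates through the lex product as well, via the coordinatewise embedding $\textnormal{Aut}(G_1) \times \textnormal{Aut}(G_2) \hookrightarrow \textnormal{Aut}(G_1 \circ G_2)$.

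The substantive step is showing that $T$ is not a Cayley tournament. By Sabidussi's \autoref{sabidussi} this is equivalent to $\textnormal{Aut}(T)$ admitting no regular subgroup. I would proceed by induction on the number of Paley factors, proving the following one-step claim: if $T_1$ is a non-Cayley vertex-transitive tournament and $T_2 = P(p_j)$ with $\gcd(|T_1|, p_j) = 1$, then $T_1 \circ T_2$ is still non-Cayley. The standard tool is a Sabidussi-type identification of the automorphism group of the lex product with the wreath product $\textnormal{Aut}(T_1) \wr \textnormal{Aut}(T_2)$, valid under suitable conditions on the factors. Given this identification, if $T_1 \circ T_2$ had a regular subgroup $\Gamma$ of order $|T_1| |T_2|$, the projection $\pi \colon \textnormal{Aut}(T_1) \wr \textnormal{Aut}(T_2) \to \textnormal{Aut}(T_1)$ would send $\Gamma$ onto a transitive subgroup whose kernel lies in $\textnormal{Aut}(T_2)^{|T_1|}$; a Sylow / order-counting argument using $\gcd(|T_1|, p_j) = 1$ then forces $\pi(\Gamma)$ itself to be regular on $V(T_1)$, contradicting that $T_1$ is non-Cayley.

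The main obstacle is precisely the wreath-product identification $\textnormal{Aut}(T_1 \circ T_2) = \textnormal{Aut}(T_1) \wr \textnormal{Aut}(T_2)$, since a priori the lex product can gain ``exotic'' automorphisms that mix the blocks. Ruling these out for the specific factors at hand is the technical heart of Mansilla's argument: the rigidity follows from the coprimality of consecutive orders together with the highly constrained automorphism structure of $T_0$ (which is a Sylow $p_i$-subgroup of $\textnormal{Aut}(G(p_i^{\ell_i}, p_i, a))$ with no regular subgroup) and of each Paley factor. Once the one-step claim is in hand, iterating it over all $\sum_{j\neq i}\ell_j$ Paley factors yields that $T$ is non-Cayley, completing the construction.
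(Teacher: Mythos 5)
The thesis itself does not prove this lemma: it is quoted from Mansilla \cite{mansilla2004infinite}, and the surrounding text only sketches the idea, namely to take the non-Cayley vertex-transitive tournament $T(p_i^{\ell_i},p_i,a)$ and absorb the remaining prime factors via lexicographic products with Paley tournaments. Your construction of $T$ is exactly that sketch, and your verifications that $T$ is a tournament and vertex-transitive are correct and routine. The issue is the ``substantive step'', where your argument has three concrete gaps. First, the identification $\textnormal{Aut}(T_1\circ T_2)\cong\textnormal{Aut}(T_1)\wr\textnormal{Aut}(T_2)$ is false for general vertex-transitive factors (lexicographic products do acquire extra automorphisms in general, which is why a non-Cayley factor can produce a Cayley product); establishing it for these particular factors is precisely the content of Mansilla's proof, so labelling it ``the technical heart'' and then assuming it leaves the actual work undone.

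Second, even granting that identification, your order-counting does not close the argument. Let $K=\Gamma\cap\textnormal{Aut}(T_2)^{V(T_1)}$ be the kernel of the action of a putative regular subgroup $\Gamma$ on the fibres. Since $\Gamma$ is regular, the stabiliser of a fibre has order exactly $p_j$, hence $|K|\in\lbrace 1,p_j\rbrace$. If $|K|=p_j$ you indeed obtain a regular subgroup of $\textnormal{Aut}(T_1)$ and the desired contradiction; but if $K$ is trivial, then $\pi(\Gamma)$ is a transitive subgroup of $\textnormal{Aut}(T_1)$ of order $|T_1|\,p_j$ with point stabilisers of order $p_j$. That group is not regular, and ``$T_1$ is non-Cayley'' only excludes regular subgroups, so no contradiction follows; an additional argument (a normal $p_j$-complement, or specific information about the Sylow structure of $\textnormal{Aut}(T_0)$) is needed and is not supplied. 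Third, your one-step claim carries the hypothesis $\gcd(|T_1|,p_j)=1$, but whenever some $\ell_j\geq 2$ the second copy of $P(p_j)$ is multiplied onto a tournament whose order is already divisible by $p_j$, so the hypothesis fails and the induction cannot be iterated as you set it up. The overall strategy is the right one (it is the one the thesis attributes to Mansilla), but as written the proof of the key preservation step is not complete.
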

Therefore, if we have a non-Cayley vertex-transitive tournament of order $s=p^k, k \geq 4$, we obtain a new non-Cayley vertex-transitive tournament of order $rs$, for any $r$ product of primes congruent $3$ mod $4$.

\section{Cayley graphs on non-isomorphic groups}
One of the problems that arise when researching Cayley tournaments with non-prime order, is the fact that one might end up with multiple non-isomorphic groups of the same order. It is then straightforward to show that there exist non-isomorphic Cayley tournaments on non-isomorphic groups with same order. But the interesting question rather is, if there exists a Cayley tournament on a certain group one, that is not isomorphic to any other Cayley tournament on a non-isomorphic group two with the same order at all. There has been some research by Anne Joseph \cite{joseph1995isomorphism} concerning squares of primes. Here we will see a short survey of the extended version by Joy Morris\cite{morris1999isomorphic}. But beforehand, let us verify the claim made first. 

Consider $n=3^2$, then up to isomorphy the only non-isomorphic groups are $\mathbb{Z}_9$ and $\mathbb{Z}_3^2$. We define the sets 
\[	S=\lbrace 1, 7, 3, 5 \rbrace \subseteq \mathbb{Z}_9 \] and \[S'=\lbrace (0,1), (2,0), (1,1), (2,1) \rbrace \subseteq \mathbb{Z}_3^2. \]
Both sets yield Cayley tournaments of order $9$. We will now look deeper into the number of triangles with every edge pointing in the same direction. Remember that $a \sim b$ if and only if $b=a+s, s\in S$ and $(a,b) \sim (a',b')$ if and only if $(a',b') = (a,b) + s, s \in S'$.\begin{figure}[h]
\centering
	\begin{tikzpicture}[scale=2]
    \node (A) at (1,1)  {a+8=a-1};
    \node (B) at (1,0.1)  {a+2=a-7};
    \node (C) at (1,-0.8)  {a+6=a-3};
    \node (D) at (1,-1.8)  {a+4=a-5};
    \node (E) at (2,-0.3)  {a};
    \node (F) at (3,1)  {a+1};
    \node (G) at (3,0.1)  {a+7};
    \node (H) at (3,-0.8)  {a+3};
    \node (I) at (3,-1.8)  {a+5};
    \node (J) at (4,1.5)  {a+2};
    \node (K) at (4,1.25) {a+8};
    \node (L) at (4,1)  {a+4};
    \node (M) at (4,0.75)  {a+6};
    \node (N) at (4,0.5)  {a+8};
    \node (O) at (4,0.25)  {a+5};
    \node (P) at (4,0)  {a+1};
    \node (Q) at (4,-0.25)  {a+3};
    \node (R) at (4,-0.5) {a+4};
    \node (S) at (4,-0.75)  {a+1};
    \node (T) at (4,-1)  {a+6};
    \node (U) at (4,-1.25)  {a+8};
    \node (V) at (4,-1.5)  {a+6};
    \node (W) at (4,-1.75)  {a+3};
    \node (X) at (4,-2)  {a+8};
    \node (Y) at (4,-2.25)  {a+1};

    \draw[->] (A) to (E);
    \draw[->] (B) to (E);
    \draw[->] (C) to (E);
    \draw[->] (D) to (E);
    \draw[->] (E) to (F);
    \draw[->] (E) to (G);
    \draw[->] (E) to (H);
    \draw[->] (E) to (I);
    \draw[->] (F) to (J);  
    \draw[->] (F) to (K);  
    \draw[->] (F) to (L);  
    \draw[->] (F) to (M);  
    \draw[->] (G) to (N);
    \draw[->] (G) to (O);  
    \draw[->] (G) to (P);  
    \draw[->] (G) to (Q); 
    \draw[->] (H) to (R);  
    \draw[->] (H) to (S);  
    \draw[->] (H) to (T);  
    \draw[->] (H) to (U);  
    \draw[->] (I) to (V);  
    \draw[->] (I) to (W);  
    \draw[->] (I) to (X);  
    \draw[->] (I) to (Y);  
   
\end{tikzpicture}
\caption{$S=\lbrace 1, 7, 3, 5 \rbrace \subseteq \mathbb{Z}_9$}
\end{figure}

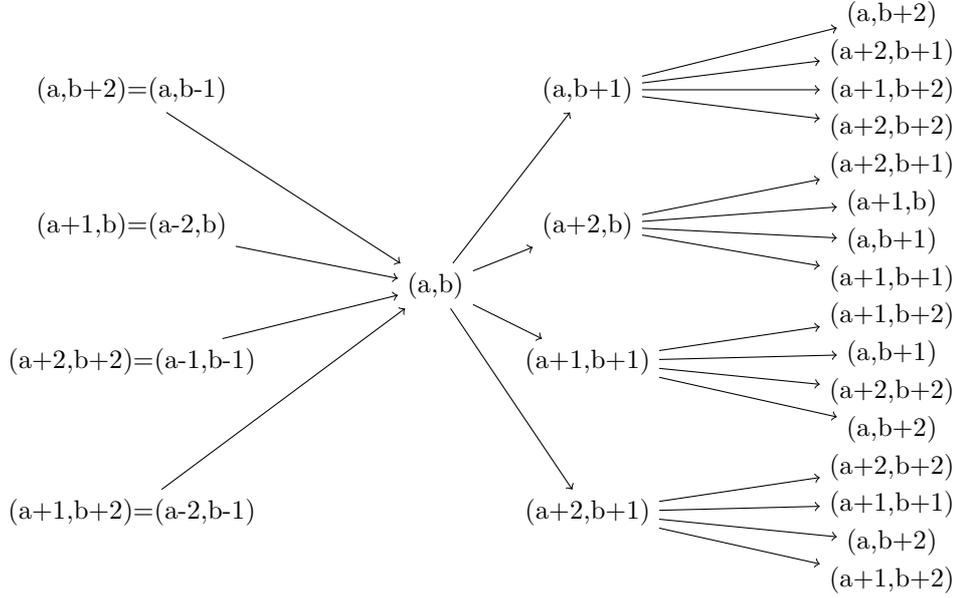
\begin{figure}[h]
\centering
	\begin{tikzpicture}[scale=2]
    \node (A) at (1,1) {(a,b+2)=(a,b-1)};
    \node (B) at (1,0.1)  {(a+1,b)=(a-2,b)};
    \node (C) at (1,-0.8)  {(a+2,b+2)=(a-1,b-1)};
    \node (D) at (1,-1.8)  {(a+1,b+2)=(a-2,b-1)};
    \node (E) at (3,-0.3)  {(a,b)};
    \node (F) at (4,1)  {(a,b+1)};
    \node (G) at (4,0.1)  {(a+2,b)};
    \node (H) at (4,-0.8)  {(a+1,b+1)};
    \node (I) at (4,-1.8)  {(a+2,b+1)};
    \node (J) at (6,1.5)  {(a,b+2)};
    \node (K) at (6,1.25)  {(a+2,b+1)};
    \node (L) at (6,1)  {(a+1,b+2)};
    \node (M) at (6,0.75)  {(a+2,b+2)};
    \node (N) at (6,0.5)  {(a+2,b+1)};
    \node (O) at (6,0.25)  {(a+1,b)};
    \node (P) at (6,0)  {(a,b+1)};
    \node (Q) at (6,-0.25)  {(a+1,b+1)};
    \node (R) at (6,-0.5)  {(a+1,b+2)};
    \node (S) at (6,-0.75)  {(a,b+1)};
    \node (T) at (6,-1)  {(a+2,b+2)};
    \node (U) at (6,-1.25)  {(a,b+2)};
    \node (V) at (6,-1.5)  {(a+2,b+2)};
    \node (W) at (6,-1.75)  {(a+1,b+1)};
    \node (X) at (6,-2) {(a,b+2)};
    \node (Y) at (6,-2.25) {(a+1,b+2)};
    \draw[->] (A) to (E);
    \draw[->] (B) to (E);
    \draw[->] (C) to (E);
    \draw[->] (D) to (E);
    \draw[->] (E) to (F);
    \draw[->] (E) to (G);
    \draw[->] (E) to (H);
    \draw[->] (E) to (I);
    \draw[->] (F) to (J);  
    \draw[->] (F) to (K);  
    \draw[->] (F) to (L);  
    \draw[->] (F) to (M);  
    \draw[->] (G) to (N);
    \draw[->] (G) to (O);  
    \draw[->] (G) to (P);  
    \draw[->] (G) to (Q); 
    \draw[->] (H) to (R);  
    \draw[->] (H) to (S);  
    \draw[->] (H) to (T);  
    \draw[->] (H) to (U);  
    \draw[->] (I) to (V);  
    \draw[->] (I) to (W);  
    \draw[->] (I) to (X);  
    \draw[->] (I) to (Y);  
   
\end{tikzpicture}
\caption{$S'=\lbrace (0,1), (2,0), (1,1), (2,1) \rbrace \subseteq \mathbb{Z}_3^2$}
\end{figure}

In Figure 5.1 take for example $a=0$ and $a+1=1$. They have in common $4$ triangles with every edge pointing in the same direction. On the other hand, in Figure 5.2, we can not find two such vertices. This means $\overrightarrow{\textnormal{Cay}}_{tourn}(\mathbb{Z}_9,S) \not \cong \overrightarrow{\textnormal{Cay}}_{tourn}(\mathbb{Z}_3^2,S')$, which proves the claim.
We will now engage in the results of Joy Morris \cite{morris1999isomorphic}.

Let $G,H$ be digraphs. The \textit{wreath product} $G \wr H$ is defined to be the digraph with $V(G \wr H) = V(G) \times V(H)$ and $(v,w) \sim (v',w')$ if and only if 
\[ v=v'\ \textnormal{and}\ w \sim w' \] or
	\[ v \sim v'. \]
	
A partial order on the set of abelian groups of order $p^k, p \in \mathbb{P}$ is defined as follows. Say $\Gamma' \leq_{po} \Gamma$ if and only if there exists a chain of subgroups 
\[ \Gamma_1 < \Gamma_2 < ... < \Gamma_n=\Gamma, \] 
such that $\Gamma_1, (\Gamma_2 / \Gamma_1),...., (\Gamma_n / \Gamma_{n-1})$ are cyclic and
	\[ \Gamma' \cong \Gamma_1 \times (\Gamma_2 / \Gamma_1) \times ....\times (\Gamma_n / \Gamma_{n-1}). \]
We give a short example for $k=5$. 
\begin{example}
	From the fundamental theorem of finitely generated abelian groups it follows, that any abelian group of order $p^5$ is isomorphic to a group of the form $\mathbb{Z}_{p^{k_1}} \times ... \times \mathbb{Z}_{p^{k_{\ell}}}, \sum_i k_i =5$.
For $m \geq n$ the inclusion $\mathbb{Z}_{p^m} \overset{\iota}{\hookrightarrow} \mathbb{Z}_{p^n}, \lbrack x \rbrack_m \mapsto \lbrack x \rbrack_n$ is well-defined: $\lbrack x \rbrack_m = \lbrack y \rbrack_m$ equals $x = y + lp^m = y + (lp^{m-n})p^n, l \in \mathbb{Z}$, which equals $\lbrack x \rbrack_n = \lbrack y \rbrack_n$. Hence, we have $\mathbb{Z}_{p^n} \cong \mathbb{Z}_{p^m} / \textnormal{ker}(\iota) \cong \mathbb{Z}_{p^m} / \mathbb{Z}_{p^{m-n}}$. Now consider $\Gamma' = \mathbb{Z}_p \times \mathbb{Z}_{p^4} $ and $\Gamma = \mathbb{Z}_{p^5}$. Since $\Gamma$ is additive-cyclic, for every $k=1,...,5$ there exists a unique subgroup of order $p^k$, which in this case is $\mathbb{Z}_{p^k}$. Hence, we have a $1$-chain 
\[ \Gamma_1 = \mathbb{Z}_p < \mathbb{Z}_{p^5} = \Gamma \]
 and 
 \[ \Gamma' = \mathbb{Z}_p \times \mathbb{Z}_{p^4} \cong \mathbb{Z}_p \times (\mathbb{Z}_{p^5} / \mathbb{Z}_p) = \Gamma_1 \times (\Gamma / \Gamma_1), \] 
 which yields $\Gamma' \leq_{po} \Gamma$. Next consider $\Gamma' = \mathbb{Z}_p \times \mathbb{Z}_p \times \mathbb{Z}_{p^3} $ and $\Gamma = \mathbb{Z}_p \times \mathbb{Z}_{p^4}$. We again obtain a $1$-chain
 \[ \Gamma_1 = \lbrace 0 \rbrace \times \mathbb{Z}_p < \mathbb{Z}_p \times \mathbb{Z}_{p^4} = \Gamma, \] and
  \begin{align*}
  	\Gamma' = \mathbb{Z}_p \times \mathbb{Z}_p \times \mathbb{Z}_{p^3} & \cong \lbrace 0 \rbrace \times \mathbb{Z}_p \times \mathbb{Z}_p \times \mathbb{Z}_{p^3} \\ 
  	& \cong \lbrace 0 \rbrace \times \mathbb{Z}_p \times \big((\mathbb{Z}_p \times \mathbb{Z}_{p^4})/( \lbrace 0 \rbrace \times \mathbb{Z}_p)\big) = \Gamma_1 \times (\Gamma / \Gamma_1).
  \end{align*}
   Going on, this yields Figure 5.3.
\end{example}

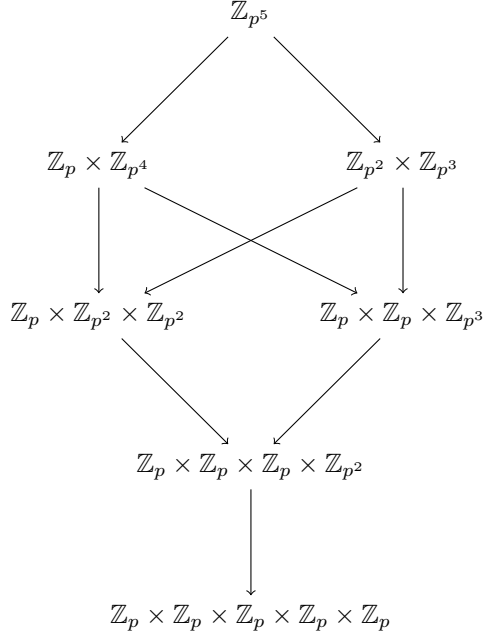
\begin{figure}[h]
\centering
	\begin{tikzpicture}[scale=2]
    \node (A) at (1,4)  {$\mathbb{Z}_{p^5}$};
    \node (B) at (0,3)  {$\mathbb{Z}_{p} \times \mathbb{Z}_{p^4}$};
    \node (C) at (2,3)  {$\mathbb{Z}_{p^2} \times \mathbb{Z}_{p^3}$};
    \node (D) at (0,2)  {$\mathbb{Z}_{p} \times \mathbb{Z}_{p^2} \times \mathbb{Z}_{p^2}$};
    \node (E) at (2,2)  {$\mathbb{Z}_{p} \times \mathbb{Z}_{p} \times \mathbb{Z}_{p^3}$};
    \node (F) at (1,1)  {$\mathbb{Z}_{p} \times \mathbb{Z}_{p} \times \mathbb{Z}_{p} \times \mathbb{Z}_{p^2}$};
    \node (G) at (1,0)  {$\mathbb{Z}_{p} \times \mathbb{Z}_{p} \times \mathbb{Z}_{p} \times \mathbb{Z}_{p} \times \mathbb{Z}_{p}$};

    \draw[->] (A) to (B);
    \draw[->] (A) to (C);
    \draw[->] (B) to (E);
    \draw[->] (C) to (D);
    \draw[->] (B) to (D);
    \draw[->] (C) to (E);
    \draw[->] (D) to (F);
    \draw[->] (E) to (F);
    \draw[->] (F) to (G);

\end{tikzpicture}
\caption{Partial order for abelian groups of order $p^5$.}
\end{figure}

 We can give the main result now.

\begin{theorem}
	Let $\Gamma$ be an abelian group of order $p^k, p \in \mathbb{P}\setminus\lbrace 2 \rbrace$ and $G:=\overrightarrow{\textnormal{Cay}}(\Gamma,S)$. Then the following are equivalent:
	\begin{enumerate}
		\item $G$ is isomorphic to a Cayley digraph on both $\mathbb{Z}_{p^k}$ and $\Gamma'$, where $\Gamma'$ is an arbitrary abelian group of order $p^k$.
	
			\item There exists a chain of subgroups $\Gamma_1 < \Gamma_2 < ... < \Gamma_n=\Gamma$, such that 
					\begin{enumerate}
			\item $\Gamma_1, (\Gamma_2 / \Gamma_1),...., (\Gamma_n / \Gamma_{n-1})$ are cyclic,
			\item $\Gamma_1 \times (\Gamma_2 / \Gamma_1) \times ....\times (\Gamma_n / \Gamma_{n-1}) \leq_{po} \Gamma',$
			\item For all $s \in S\setminus \Gamma_i$, we have $s + \Gamma_i \subseteq S$, for $i=1,...,n-1$
		\end{enumerate}
		\item There exist Cayley digraphs $G_1,...,G_n$ on cyclic $p$-groups $\Gamma_1,...,\Gamma_n$, such that $\Gamma_1 \times ... \times \Gamma_n \leq_{po} \Gamma$ and $G \cong G_n \wr ... \wr G_1$.\\\\
		These in turn imply:
		\item $G$ is isomorphic to Cayley digraphs on every abelian group of order $p^k$, that is greater than $\Gamma'$ in the partial order.
	\end{enumerate}
\end{theorem}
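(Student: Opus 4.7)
The plan is to establish the implications in the order $(2) \Leftrightarrow (3)$, then $(3) \Rightarrow (1)$ and $(3) \Rightarrow (4)$, and finally the reverse direction $(1) \Rightarrow (2)$. The central structural observation is that condition $(2)$(c) is exactly what is needed to factor $G$ as an iterated wreath product along the chain $\Gamma_1 < \ldots < \Gamma_n$, so $(2)$ and $(3)$ are two expressions of the same decomposition, while $(1)$ and $(4)$ are statements about which abelian groups of order $p^k$ such a decomposition can be realized on.

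First I would prove $(2) \Rightarrow (3)$ by induction on $n$. Condition $(2)$(c) with $i=1$ says that $S \setminus \Gamma_1$ is a disjoint union of cosets of $\Gamma_1$. A direct edge-by-edge check gives $\overrightarrow{\textnormal{Cay}}(\Gamma, S) \cong G' \wr \overrightarrow{\textnormal{Cay}}(\Gamma_1, S \cap \Gamma_1)$, where $G' = \overrightarrow{\textnormal{Cay}}(\Gamma/\Gamma_1, \overline{S})$ and $\overline{S}$ is the image of $S \setminus \Gamma_1$ in $\Gamma/\Gamma_1$. The shortened chain $\Gamma_2/\Gamma_1 < \ldots < \Gamma_n/\Gamma_1 = \Gamma/\Gamma_1$ still satisfies $(2)$(a),(c) for $G'$, so by induction $G' \cong G_n \wr \ldots \wr G_2$ on the cyclic quotients $\Gamma_i/\Gamma_{i-1}$. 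The reverse direction $(3) \Rightarrow (2)$ is symmetric: a witnessing chain $\Delta_1 < \ldots < \Delta_n = \Gamma$ for the relation $\Gamma_1 \times \ldots \times \Gamma_n \leq_{po} \Gamma$ pulls the wreath decomposition back to a connection set on $\Gamma$ satisfying $(2)$(c).

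Next I would prove $(3) \Rightarrow (1)$ and $(3) \Rightarrow (4)$ simultaneously via the following realization lemma: if $\Gamma_1 \times \ldots \times \Gamma_n \leq_{po} \Gamma''$ for some abelian group $\Gamma''$ of order $p^k$, with witnessing chain $\Delta_1 < \ldots < \Delta_n = \Gamma''$, then the iterated wreath product $G_n \wr \ldots \wr G_1$ is isomorphic to $\overrightarrow{\textnormal{Cay}}(\Gamma'', S'')$, where $S''$ is constructed inductively: $S'' \cap \Delta_1$ corresponds to the connection set of $G_1$ via $\Delta_1 \cong \Gamma_1$, and for $i \geq 2$ the cosets of $\Delta_{i-1}$ in $\Delta_i \setminus \Delta_{i-1}$ that lie in $S''$ correspond to the connection set of $G_i$ via $\Delta_i/\Delta_{i-1} \cong \Gamma_i$. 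The verification is again a direct induction. Applying this with $\Gamma'' = \mathbb{Z}_{p^k}$ (which sits at the top of the partial order and therefore dominates every $\Gamma_1 \times \ldots \times \Gamma_n$) and with $\Gamma'' = \Gamma'$ yields $(1)$; applying it to an arbitrary $\Gamma'' \geq_{po} \Gamma'$ yields $(4)$.

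The main obstacle is $(1) \Rightarrow (2)$, where one must recover a structural chain in $\Gamma$ from the mere existence of realizations of $G$ on $\mathbb{Z}_{p^k}$ and on $\Gamma'$. My plan is to work with the three regular abelian subgroups of $\textnormal{Aut}(G)$ furnished by $\Gamma$, $\mathbb{Z}_{p^k}$, and $\Gamma'$, and to exploit the fact that the subgroup lattice of $\mathbb{Z}_{p^k}$ is totally ordered. This lattice pulls back to a chain of $\textnormal{Aut}(G)$-invariant block systems on $V(G)$, and intersecting it with the regular subgroup $\Gamma$ yields a chain $\Gamma_1 < \ldots < \Gamma_n$ in $\Gamma$ with cyclic successive quotients, establishing $(2)$(a). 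Condition $(2)$(c) should then follow because edges between distinct blocks of a block system either are absent or form a complete bipartite pattern, forcing $S \setminus \Gamma_i$ to be a union of cosets of $\Gamma_i$. The really delicate point, and the step I expect to be hardest, is $(2)$(b): the direct product of the quotients of this chain must be $\leq_{po} \Gamma'$. This requires matching the block-system chain extracted from the $\mathbb{Z}_{p^k}$-realization against the invariant-factor decomposition of the $\Gamma'$-realization, and the hypothesis $p \neq 2$ enters here through the regular behaviour of Sylow $p$-subgroups of $\textnormal{Sym}(p^k)$ for odd $p$, which rules out the exotic intertwinings of block systems that could otherwise obstruct compatibility.
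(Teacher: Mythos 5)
First, note that the paper itself offers no proof of this statement: it is quoted as a survey of Joy Morris's result \cite{morris1999isomorphic}, so there is no in-paper argument to compare yours against. Judged on its own terms, your outline of $(2)\Leftrightarrow(3)$ and of the realization lemma giving $(3)\Rightarrow(1)$ and $(3)\Rightarrow(4)$ is sound: condition 2(c) at level $i=1$ does say that $S\setminus\Gamma_1$ is a union of cosets of $\Gamma_1$, which is precisely the wreath decomposition $\overrightarrow{\textnormal{Cay}}(\Gamma,S)\cong\overrightarrow{\textnormal{Cay}}(\Gamma/\Gamma_1,\overline{S})\wr\overrightarrow{\textnormal{Cay}}(\Gamma_1,S\cap\Gamma_1)$, and the induction plus the fact that $\mathbb{Z}_{p^k}$ is the maximum of $\leq_{po}$ carries the constructive directions.

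The genuine gap is in $(1)\Rightarrow(2)$, and it occurs before the step you flag as hardest. You assert that the subgroup chain of the regular cyclic subgroup ``pulls back to a chain of $\textnormal{Aut}(G)$-invariant block systems'' and that ``edges between distinct blocks of a block system either are absent or form a complete bipartite pattern, forcing $S\setminus\Gamma_i$ to be a union of cosets of $\Gamma_i$.'' Neither claim holds. The orbit partition of a subgroup of one regular subgroup is a block system for that regular subgroup, not automatically for $\textnormal{Aut}(G)$; the correct device is to place both regular subgroups (the copy of $\Gamma$ and the cyclic one) inside a common Sylow $p$-subgroup $P$ of $\textnormal{Aut}(G)$ and use the nontrivial centre of the transitive $p$-group $P$ to manufacture $P$-invariant partitions. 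More seriously, a block system never by itself forces a complete-or-empty edge pattern between blocks: $\overrightarrow{\textnormal{Cay}}(\mathbb{Z}_9,\lbrace 1\rbrace)$ has the cosets of $\langle 3\rangle$ as blocks, yet the edges between distinct cosets are far from complete bipartite, and indeed that digraph does not satisfy 2(c). Deriving the coset condition 2(c) from the mere coexistence of two non-isomorphic regular abelian subgroups is the actual content of Joseph's and Morris's arguments (a commutator analysis inside $P$ showing that unless $S$ is a union of the relevant cosets, the two regular subgroups cannot both normalize the required structure), so this step needs an entirely different argument, not just the block-system observation.
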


\begin{example}Reconsider $\overrightarrow{\textnormal{Cay}}_{tourn}(\mathbb{Z}_9,\lbrace 1, 7, 3, 5 \rbrace)$. We've already seen that it is not isomorphic to $\overrightarrow{\textnormal{Cay}}_{tourn}(\mathbb{Z}_3^2,\lbrace (0,1), (2,0), (1,1), (2,1) \rbrace)$. Assume there exists a Cayley tournament on $\Gamma' = \mathbb{Z}_3^2$, that it is isomorphic to. Then the first statement holds, since every Cayley tournament is a Cayley digraph, and hence, the second statement must hold as well. There exist two subgroups of $\Gamma = \mathbb{Z}_9$, namely $\Gamma_1 = \langle 3 \rangle = \lbrace 0, 3, 6 \rbrace$ and $\lbrace 0 \rbrace$. Without loss of generality we can consider only $\Gamma_1$. Now $2(c)$ yields $\textnormal{for all}\ s \in S \setminus \Gamma_1 = \lbrace 1, 7, 3, 5 \rbrace \setminus \lbrace 0, 3, 6 \rbrace = \lbrace 1, 7, 5 \rbrace$: 
\[ s + \Gamma_1 = s + \lbrace 1, 7, 3, 5 \rbrace \subseteq S = \lbrace 1, 7, 3, 5 \rbrace. \]
This is not the case for $s=1$, which is a contradiction. Hence, \\$\overrightarrow{\textnormal{Cay}}_{tourn}(\mathbb{Z}_9,\lbrace 1, 7, 3, 5 \rbrace)$ is not isomorphic to any Cayley tournament on $\mathbb{Z}_3^2$.
\end{example}
\begin{example}
	On the other hand, showing that \\$\overrightarrow{\textnormal{Cay}}_{tourn}(\mathbb{Z}_3^2,\lbrace (0,1), (2,0), (1,1), (2,1) \rbrace)$ is not isomorphic to any Cayley tournament on $\mathbb{Z}_9$ does not work as smoothly as before. Assume the opposite, then $1.$ is true with $\Gamma' = \mathbb{Z}_9$. Again, without loss of generality it suffices to investigate the non-trivial subgroups, since the chain of subgroups consisting only of $\Gamma = \mathbb{Z}_3^2$ and the trivial subgroup will not satisfy $2(a)$. There are four subgroups of order three: 
	\begin{enumerate}
		\item $\Gamma_1 = \lbrace (0,0), (1,0), (2,0) \rbrace,$
		\item $\Gamma_2 = \lbrace (0,0), (0,1), (0,2) \rbrace,$
		\item $\Gamma_3 = \lbrace (0,0), (1,1), (2,2) \rbrace,$
		\item $\Gamma_4 = \lbrace (0,0), (1,2), (2,1) \rbrace.$
	\end{enumerate}
	Hence, with $S = \lbrace (0,1), (2,0), (1,1), (2,1) \rbrace$ we obtain :
		\begin{enumerate}
		\item $S \setminus \Gamma_1 = \lbrace (0,1), (1,1), (2,1) \rbrace,$
		\item $S \setminus \Gamma_2 = \lbrace (2,0), (1,1), (2,1) \rbrace,$
		\item $S \setminus \Gamma_3 = \lbrace (0,1), (2,0), (2,1) \rbrace,$
		\item $S \setminus \Gamma_4 = \lbrace (0,1), (2,0), (1,1) \rbrace.$
	\end{enumerate}
	Now in $2.$ we have $(2,0) + (0,2) = (2,2) \not\in S$, in 3. $(0,1) + (1,1) = (1,2) \not \in S$, and in 4. $(0,1) + (2,1) = (2,2) \not \in S$. But in 1. we will unfortunately succeed. So our proof by contradiction does not work. Even more, the $1$-chain
	\[ \Gamma_1 = \lbrace (0,0), (1,0), (2,0) \rbrace < \mathbb{Z}_3^2 = \Gamma \] actually fulfills $2.$: 
	On the one hand we have $\Gamma_1 = \lbrace (0,0), (1,0), (2,0) \rbrace$ being cyclic with $(1,0)$ and on the other hand we have 
 \begin{align*}
 	\Gamma / \Gamma_1 &= \mathbb{Z}_3^2 / \lbrace (0,0), (1,0), (2,0) \rbrace\\ &= \big\lbrace \lbrace (0,0), (1,0), (2,0) \rbrace, \lbrace (1,1),(2,1),(0,1) \rbrace, \lbrace (2,2), (0,2), (1,2) \rbrace \big \rbrace,
 \end{align*}
 which is cyclic with $\lbrace (1,1),(2,1),(0,1) \rbrace$. 
Furthermore, we have 
	\begin{align*}
		\Gamma_1 \times \Gamma / \Gamma_1 &= \lbrace (0,0), (1,0), (2,0) \rbrace \times (\mathbb{Z}_3^2 / \lbrace (0,0), (1,0), (2,0) \rbrace)\\ &\cong \mathbb{Z}_3^2 \\&= \mathbb{Z}_3 \times (\mathbb{Z}_9 / \mathbb{Z}_3)\\&= \Gamma'_1 \times (\Gamma' / \Gamma'_1)
	\end{align*}
by mapping $((i,0),\lbrack (j,j) \rbrack) \mapsto (i,j)$.
Hence, $\Gamma_1 \times (\Gamma / \Gamma_1) \leq_{po} \Gamma'$. It follows that, $\overrightarrow{\textnormal{Cay}}_{tourn}(\mathbb{Z}_3^2,\lbrace (0,1), (2,0), (1,1), (2,1) \rbrace)$ is isomorphic to a Cayley digraph on $\mathbb{Z}_9$. Due to isomorphy, this Cayley digraph is also a tournament. But therefore it must be finally a Cayley tournament. 
	
\end{example}

\chapter*{Statement}
Die vorliegende Arbeit habe ich selbständig verfasst und keine anderen als die angegebenen Hilfsmittel - insbesondere keine im Quellenverzeichnis nicht benannten Internet-Quellen - benutzt. Die Arbeit habe ich vorher nicht in einem anderen Prüfungsverfahren eingereicht. Die eingereichte schriftliche Fassung entspricht genau der auf dem elektronischen Speichermedium.
 \vskip 2cm \noindent Stefan Zetzsche

\bibliography{verzeichnis1}
\bibliographystyle{plain}

\end{document}